\documentclass{aiml}
 
\usepackage{aimlmacro} 

\usepackage[utf8]{inputenc}
\usepackage{amsmath}
\usepackage{amsfonts}
\usepackage{amssymb}
\usepackage{bussproofs}
\usepackage{tikz}
\usepackage{ stmaryrd }
\usepackage{graphicx}
\usepackage{forest}
\usepackage{xcolor}
\usepackage{mathrsfs}
\usepackage{enumitem}
\usepackage{booktabs}
\usepackage{comment}
\usepackage{cancel}
\usepackage{url}

\usepackage{mathrsfs}




\begin{document}

\begin{frontmatter}
  \title{Base-extension Semantics for S5 Modal Logic}
  \author{Timo Eckhardt}
  \address{UCL Computer Science\\ London WC1E 6BT, UK, 
  t.eckhardt@ucl.ac.uk}
  \author{David Pym}
  \address{UCL Computer  Science and Philosophy \& Institute of Philosophy \\ London WC1E 6BT, UK, 
  d.pym@ucl.ac.uk}

\begin{abstract}
We develop a proof-theoretic semantics --- in particular, 
a base-extension semantics --- for multi-agent $S5$ modal logic (and hence 
also for the usual unindexed $S5$). Following the inferentialist interpretation of logic, this gives us a semantics in which validity is based on proof rather than truth. 
In base-extension semantics, the validity of formulae is generated by provability in a `base' of atomic rules and an inductive definition of the validity of the connectives. Base-extension semantics for many interesting logics has been explored by several authors and, in particular, a base-extension semantics for the modal logics $K$, $KT$, $K4$, and $S4$ has been developed by the present authors. Here, we give a base-extension semantics for multi-agent $S5$ with $\square_a$, for an agent $a$, as our primary operators, framed as the knowledge operator $K_a$. Similarly to Kripke semantics, we make use of relational structure between bases, allowing us to establish a correspondence between certain bases and worlds. We use this to establish the appropriate soundness and completeness results. We conclude by discussing how this semantics can be extended to Dynamic Epistemic Logics (DEL) starting with Public Announcement Logic (PAL).
\end{abstract}

  \begin{keyword}
  Proof-theoretic semantics, base-extension semantics, modal logic, (multi-agent) S5 modal logic, epistemic logic
  \end{keyword}
 \end{frontmatter}

\section{Introduction}

In proof-theoretic semantics (P-tS), meaning is based on inference. It is a 
logical realization of the philosophical position known as inferentialism \cite{brandom2009}. It provides a universal approach to the meaning of the operators of logical systems: their meaning is given in terms of their use. By contrast, in model-theoretic semantics (Mt-S) meaning is determined relative to a choice of abstract (model) structure (see, for example, \cite{Schroeder2007modelvsproof}). 
 
Current research in Pt-S largely follows two different approaches. The first is proof-theoretic validity, following Dummett \cite{Dummett1991} and Prawitz \cite{Prawitz1971,Prawitz2006}. 
It aims to define what makes a proof valid. Dummett's \emph{The Logical Basis of Metaphysics} \cite{Dummett1991} develops a philosophical interpretation of the normality results for the natural deduction rules of intuitionistic propositional logic (IPL). A valid proof is then one that can, by certain fixed operations, be transformed into a proof without unnecessary detours called a canonical proof. This approach is closely related to the Brouwer-Heyting-Kolmogorov interpretation of IPL. For a more in-depth explanation see \cite{SchroederHeister2006,Schroeder2007modelvsproof,SchroederHeister2008}.

The second approach to proof-theoretic semantics is base-extension semantics (B-eS), as developed in, for example, \cite{Makinson2014,piecha2017definitional,Sandqvist2005,Sandqvist2009,Sandqvist2015,SchroederHeister2016}. In B-eS, sets of atomic rules, called bases, establish the validity of atoms and the meaning of the logical connectives is then inductively defined relative to these bases. The choice of the form of atomic rules has profound repercussions: In the usual set-up of B-eS as stated above, limiting the atomic rules to simple production rules yields semantics for classical logic (see \cite{Sandqvist2005,Sandqvist2009}), while allowing for the discharging of atoms as assumptions, and taking care of the treatment of disjunction, results in intuitionistic semantics \cite{Sandqvist2015}. However, see \cite{Goldfarb2016,Stafford2023} for alternative approaches that are related to Kripke semantics. Deep connections between B-eS and the least fixed point semantics for logic programming in IPL, based 
on the proof-theoretic operational semantics of proof-search over hereditary Harrop formulae, such as that described in \cite{Miller89}, have been described in \cite{GP-BSecLog-2023,SchroederHeister2008}. In this setting, definite formulae and negation-as-failure (for intuitionistic propositional logic) are key ideas \cite{GP-BSecLog-2023,SchroederHeister2008}. 

The standard M-tS for modal logic is Kripke semantics \cite{kripke1959,kripke1963}. In this semantics, the validity of formulae is defined by reference to sets of possible worlds and a relation over those worlds. A necessitated formula $\square\phi$ is true at a world if and only if $\phi$ holds at all the worlds reachable via the relation. In multi-agent modal logics, relations are indexed by agents 
and provide models that are able to interpret modalities index by agents. Different modal logics arise according to different restrictions on relations. 

The goal of this paper is to give a Pt-S for multi-agent $S5$ modal logic with $\square_a$, for an agent $a$, as the primary operators. These are framed as the knowledge operator $K_a$, by adapting the Be-S for the modal logics $K$, $KT$, $K4$, and $S4$ given in \cite{Eckhardt2024}. Additionally, we move from single-agent (pure $S5$) to multi-agent $S5$. This simple generalization of $S5$ is achieved by simply considering multiple relations, one for each agent. 
In future work, the multi-agent set-up can be expected to provide the basis for a base-extension semantics for (static) epistemic logic, as used to develop Dynamic Epistemic Logics (DEL) such as Public Announcement Logic (PAL). As such, it seems a worthwhile generalization at this point. 

Similarly to Kripke semantics, relations between bases are used to define validity at a base given a relation. We remark that we do not consider the use of relational structure on bases to be inappropriate from the inferentialist perspective. Indeed, the superset relation is already used for the B-eS of classical propositional logic (and also intuitionistic propositional logic) and we consider that there is no reason why other relations between bases should not be used. Just as in the cases of classical and intutionistic propositional, the validity of modal formulae depends essentially on the atomic rules of those bases reachable via the relation. In the specific set-up of the semantics in \cite{Eckhardt2024}, Euclidean relations and the $5$ axiom turned out not to correspond to each other. We address this issue by changing which relations between bases we consider. 

In Section~\ref{sec:kripke}, we revisit the relational structures required to give Kripke semantics for modal logics.
In Section~\ref{sec:classical-B-eS}, we summarize our recent work, building on Sandqvist's \cite{Sandqvist2005,Sandqvist2009} and Makinson's \cite{Makinson2014} work on treatments of classical propositional logic, on the B-eS for modal logics $K$, $KT$, $K4$, and $S4$. The ideas of these sections will be required for 
Section~\ref{sec:B-eS-S5}, in which we give a B-eS for (multi-agent) 
$S5$. Here, the main challenge is to handle the Euclidean axiom. 
Note that the method we develop here is applicable \emph{only} to $S5$ and --- because some of the resulting proofs rely on the relations between bases being symmetric --- cannot be applied to $K$, $KT$, $K4$, and $S4$. 
In Section~\ref{sec:completeness} we show this semantics to be sound and complete.\footnote{We follow Makinson \cite{Makinson2014}, who gives a base-extension semantics for classical propositional logic on which we base our base-extension semantics for modal logics, in saying that the semantics is sound and complete with respect to a logical system rather than the more usual form that the logical system is sound and complete with respect to the semantics. This also means that the statements of soundness and completeness theorems are swapped compared to how they usually are used.} In Section~\ref{sec:discussion}, we conclude with 
a discussion of how to extend our results to Dynamic Epistemic Logics (DEL), beginning with Public Announcement Logic (PAL). We provide some technical proofs in an appendix. 

\section{Kripke semantics} \label{sec:kripke}


We define a multi-agent Kripke semantics for $S5$ modal logic and give the axioms that constitute sound and complete proof systems. We take $\square$ to be the principal modal operator. As we consider multiple agents, in deference to convention, we use $K_a$ to denote the $\square_a$-operator corresponding to an agent $a$. A single-agent modal logic with only a single $\square$ can be obtained by restricting the set of agents $A$ to a single agent. We also give a sound and complete axiomatic proof system (taken from \cite{Blackburn2001}). 

 A formula $K_a \phi$ holds at a world $w$ iff $\phi$ holds at all worlds $v$ such that $R_a wv$, where $R_a$ is a (reflexive, transitive and Euclidean) relation between worlds. The natural reading for such a formula in an epistemic logic is as `the agent $a$ knows property $\phi$' and, correspondingly, the relation $R_a$ is interpreted as indistinguishability for $a$, that is $R_a wv$ iff at world $w$ agent $a$ cannot distinguish between $w$ and $v$. So, a formula $K_a \phi$ holds at a world $w$ iff at all the worlds that the agent $a$ finds indistinguishable to the world $w$ $\phi$ holds; that is that every world $a$ thinks is possible is a $\phi$ world.

The Kripke semantics and proof system established in this section will be used in Section \ref{sec:completeness} to establish the soundness and completeness of the base-extension semantics of Section \ref{sec:B-eS-S5}.

\begin{definition} 
\label{def:KripkeLanguage}
For atomic propositions $p$ and an agent $a$, the language  for modal logic $S5$ is generated by the following grammar:
\[
    \phi ::= p \mid \bot \mid \neg \phi \mid \phi\to\phi \mid K_a \phi 
\]
\end{definition}

\begin{definition}
    A frame is a pair $F = \langle W, R_A\rangle$, in which $W$ is a set of possible worlds, $R_A$ a set of binary relations on $W$, one for each agent $a\in A$. A model $M= \langle F,V\rangle$ is a pair of a frame $F$ and a valuation function $V$ giving the set of worlds in which $p$ is true s.t. $V(p) \subseteq W$ for every $p$.
\end{definition}

\begin{definition}
 $S5$-frames are those frames whose relations satisfy the frame conditions in Table \ref{BRA} and $S5$-models are those models constructed from $S5$-frames. Frames and models for other modal logics are obtained by only using the corresponding frame conditions (e.g., $KT$-frames are the reflexive frames).
\end{definition}

\begin{definition}
    \label{KripkeValidity}
    Let $F = \langle W, R_A\rangle$ be a $S5$-frame, $M=\langle F,V\rangle$ a $S5$-model and $w\in W$ a world. That a formula $\phi$ is true at $(M,w)$---denoted $M,w\vDash \phi$---as follows:

\[
\begin{array}{l@{\quad}c@{\quad}l}
M,w\vDash p   & \mbox{iff} & w\in V(p) \\
M,w\vDash \phi\to\psi & \mbox{iff} & \mbox{if $M,w\vDash \phi$, then $M,w\vDash \psi$} \\ 
M,w\vDash \neg \phi & \mbox{iff} & \mbox{$M,w \not \vDash \phi$} \\ 
M,w\vDash \bot & \mbox{never} & \\ 
M,w\vDash K_a \phi & \mbox{iff} & \mbox{for all $v$ s.t. $R_a wv$, $M,v\vDash \phi$}\\
\end{array} 
\]

    







\noindent If a formula $\phi$ is true at all worlds in a model $M$, we say $\phi$ is true in $M$. A formula is valid in a modal logic $\gamma$ iff it is true in all $\gamma$-models.


\end{definition}

\begin{table}[t]
\begin{center}
   \[
    \begin{array}{|l@{\quad}|l@{\quad}|l|}
\hline
\mbox{Name} & \mbox{Frame condition} & \mbox{Modal axiom}\\
\hline
K & \mbox{None} & K_a (\phi \to \psi) \to (K_a \phi \to K_a \psi)\\
T & \mbox{Reflexivity} & K_a  \phi \to \phi\\
4 & \mbox{Transitivity} & K_a \phi \to K_a K_a\phi \\
5 & \mbox{Euclidean} & \neg K_a \phi \to K_a\neg K_a\phi \\
\hline
\end{array}
\]
\end{center}
\caption{\label{BRA} Frame conditions and the corresponding modal axioms}
\end{table} 

\begin{definition} \label{HilbertSystem}
    The proof system for the modal logic $S5$, with $a \in A$ is given by the following axioms and rules:
    \begin{itemize}
        \item \emph{(1)}: $\phi\to(\psi\to\phi)$
        \item \emph{(2)}: $(\phi\to(\psi\to\chi))\to((\phi\to\psi)\to(\phi\to\chi))$
        \item \emph{(3)}: $(\neg\phi\to\neg\psi)\to (\psi\to\phi)$
        \item \emph{(K)}: $K_a(\phi\to\psi)\to(K_a\phi\to K_a\psi)$
        \item \emph{(T)}: $K_a \phi\to\phi$
        \item \emph{(4)}: $K_a \phi \to K_a K_a\phi$
        \item \emph{(5)}: $\neg K_a \phi\to K_a\neg K_a\phi$
        \item \emph{(MP)}: From $\phi$ and $\phi\to\psi$, infer $\psi$ 
        \item \emph{(NEC)}: From $\phi$, infer $K_a \phi$.
    \end{itemize}
    \noindent The axioms \emph{(1)-(3)} together with the rule \emph{(MP)} constitute a proof system for classical logic. Proof systems for the modal logics other than $S5$ are obtained by taking only the corresponding axioms from Table \ref{BRA} (e.g., modal logic $KT$ results from adding \emph{(K)}, \emph{(T)}, and \emph{(NEC)} to the classical proof system).
\end{definition}

Given the Kripke semantics and the corresponding axiomatic proof system we can state the following, well-known, result.

\begin{theorem}
\label{KripkeComp}
The proof system for a multi-agent modal logic $S5$ is sound and complete with respect to the validity given by $S5$-models. 
\end{theorem}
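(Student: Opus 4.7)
The plan is to follow the standard two-part argument from basic modal logic, with soundness by induction on derivations and completeness via a canonical model construction.

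For soundness, I would argue by induction on the length of a Hilbert-style derivation. The classical axioms \emph{(1)--(3)} and the rule \emph{(MP)} preserve truth at each world of any model. The axiom \emph{(K)} is valid in every Kripke frame by the semantic clause for $K_a$. The rule \emph{(NEC)} preserves model-validity, since if $\phi$ is true at every world of $M$ then at any $w$ and any $v$ with $R_a w v$ we have $M,v \vDash \phi$, hence $M,w \vDash K_a \phi$. Finally, \emph{(T)}, \emph{(4)}, and \emph{(5)} are the standard correspondence axioms for reflexivity, transitivity, and Euclideanness respectively, all of which hold in every $S5$-frame by Table \ref{BRA}.

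For completeness, I would build the canonical model $M^c = \langle W^c, R^c_A, V^c \rangle$, where $W^c$ is the set of maximally consistent sets (MCSs) of formulae in the language of Definition \ref{def:KripkeLanguage}, $R^c_a w v$ holds iff $\{\phi \mid K_a \phi \in w\} \subseteq v$, and $V^c(p) = \{w \mid p \in w\}$. The key verifications are: (i) $M^c$ is an $S5$-model, i.e., each $R^c_a$ is reflexive, transitive, and Euclidean; (ii) the truth lemma $M^c, w \vDash \phi$ iff $\phi \in w$, proved by induction on $\phi$; and (iii) the conclusion: if $\not\vdash \phi$ then $\{\neg \phi\}$ is consistent, extends by Lindenbaum's lemma to some MCS $w$, and the truth lemma gives $M^c, w \not\vDash \phi$, so $\phi$ is not $S5$-valid. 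Reflexivity and transitivity of $R^c_a$ follow directly from axioms \emph{(T)} and \emph{(4)}. For Euclideanness, suppose $R^c_a w u$, $R^c_a w v$, and $K_a \phi \in u$; if $\phi \notin v$, then $K_a \phi \notin w$ (using $R^c_a w v$), so $\neg K_a \phi \in w$, and by axiom \emph{(5)} we obtain $K_a \neg K_a \phi \in w$, hence $\neg K_a \phi \in u$ by $R^c_a w u$, contradicting $K_a \phi \in u$.

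The main obstacle is the modal case of the truth lemma, which requires the existence lemma: if $K_a \phi \notin w$, then there exists $v \in W^c$ with $R^c_a w v$ and $\phi \notin v$. This is obtained by showing that $\{\neg \phi\} \cup \{\psi \mid K_a \psi \in w\}$ is consistent (otherwise, a finite conjunction of the $\psi$'s would prove $\phi$, and applying \emph{(NEC)} and \emph{(K)} would put $K_a \phi$ in $w$) and then extending it to an MCS by Lindenbaum's lemma. With the canonical model shown to be an $S5$-model and the truth lemma established, completeness follows as described. Since this is a well-known result for multi-agent $S5$, in the actual write-up I would cite \cite{Blackburn2001} rather than expanding each step in detail.
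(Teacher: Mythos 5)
The paper offers no proof of this theorem at all: it is stated as a well-known result, with the proof system taken from the cited Blackburn--de Rijke--Venema reference. Your soundness-by-induction plus canonical-model completeness argument (Lindenbaum, truth lemma via the existence lemma, and the verification that axioms \emph{(T)}, \emph{(4)}, and \emph{(5)} force the canonical relations to be reflexive, transitive, and Euclidean) is exactly the standard proof from that source and is correct as sketched.
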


\section{Preliminary base-extension semantics} \label{sec:classical-B-eS}


In this section, we define the base-extension semantics for classical logic that will form the foundation for our modal semantics. Our presentation of classical base-extension semantics follows the versions given in \cite{Sandqvist2005,Sandqvist2009} and \cite{Makinson2014}. We also briefly introduce the modal base-extension semantics for the single-agent modal logics $K$, $KT$, $K4$, and $S4$ we developed in \cite{Eckhardt2024}. 

We define base rules and bases to give the atomic system that will underlie the definition of validity. Intuitively, a base rule is an inference rules for atomic formulae and a base is a set of such rules. This gives us a deducibility relation at a base for atomic formulae. 
This relation is first extended with semantic clauses to give a full consequence relation for the classical connectives: a formula is taken to be valid if it holds at all bases. The relation is 
then similarly further extended to encompass the modal operators of $K$, $KT$, $K4$, and $S4$, as developed in \cite{Eckhardt2024}. 

We also define the notion of maximally-consistent bases (i.e., bases containing a maximum amount of base rules without becoming inconsistent). For the single-agent modal base-extension semantics for non-Euclidean systems, we add a relation between bases and define the appropriate relations as necessary, called modal relations. We then define validity based on holding at a base given a modal relation. The notion of modal relation is especially important as, in \cite{Eckhardt2024}, we have shown that it results in incomplete semantics for Euclidean modal logics and so will have to be changed for the $S5$ modal logic in Section \ref{sec:B-eS-S5}.


\subsection{Classical base-extension semantics}

We start our presentation of classical base-extension semantics by defining the language.

\begin{definition} 
\label{def:classicalLanguage}
For atomic propositions $p$, the language for classical logic is generated by the following grammar:
\[
\phi ::= p \mid \bot \mid \phi \to \phi 
\]
\end{definition}

The negation symbol $\neg \phi$ is defined as $\phi\to\bot$. All other connectives can be obtained from $\to$ and $\bot$ in the usual manner.

As base rules and bases build the foundation of validity, we have to define them before moving to validity conditions.

\begin{definition} \label{ClassicalRule}
A \textit{base rule} is a pair $(L_j, p)$ where $L_j  = \{p_1, \dots, p_n \}$ is a finite (possibly empty) set of atomic formulae and $p$ is also an atomic formula. Generally, a base rule will be written as $p_1, \dots, p_n \Rightarrow p$ ($\Rightarrow p$ for axioms). A \textit{base} $\mathscr{B}$ is any countable collection of base rules. We call the set of all bases $\Omega$ and 
$\overline{\mathscr{B}}$ is the closure of the empty set under the rules in $\mathscr{B}$.
\end{definition}

An atomic formula $p$ is provable in a base $\mathscr{B}$ 
if $p$ can be derived using the rules in $\mathscr{B}$. 

Given these definitions, we can now define the validity of formulae at a base. The validity of atomic formulae at a base is defined as provability in a base: this is the essential foundation of base-extension semantics. The conditions for the validity of complex formulae at a base are then inductively defined in a familiar way. The validity of a formula is then defined by that formula being valid at all bases.

\begin{definition} \label{satisfaction}
Classical validity is defined as follows:
\[
\begin{array}{l@{\quad}c@{\quad}l}
 \Vdash_\mathscr{B} p    & \mbox{iff} & \mbox{$p$ is in 
            every set of atomic formulae closed under $\mathscr{B}$} \\  
        & & \mbox{(i.e., iff $p\in \overline{\mathscr{B}}$)} \\
\Vdash_\mathscr{B} \phi\to\psi & \mbox{iff} & \mbox{$\phi\Vdash_\mathscr{B} \psi$} \\ 
\Vdash_\mathscr{B} \bot & \mbox{iff} & \mbox{$\Vdash_{\mathscr{B}} p$ for every atomic formula $p$.} \\
& & \\
 \mbox{For non empty $\Gamma$:} & &\\
\Gamma\Vdash_{\mathscr{B}} \phi & \mbox{iff} & \mbox{for all $\mathscr{C}\supseteq \mathscr{B}$, if $\Vdash_{\mathscr{C}} \psi$ for all $\psi \in \Gamma$, then $\Vdash_{\mathscr{C}} \phi$}\\ 
\end{array} 
\]
A formula $\phi$ is \textit{valid} iff $\Vdash_\mathscr{B} \phi$ for every base $\mathscr{B}$.
A base $\mathscr{B}$ is {\it inconsistent} iff $\Vdash_\mathscr{B} \bot$ and {\it consistent} otherwise.
\end{definition}

For classical base-extension semantics \cite{Makinson2014} has shown that maximally-consistent bases correspond to valuations in classical semantics. In \cite{Eckhardt2024}, we have shown that the same holds true between maximally-consistent bases in modal base-extension semantics and worlds in Kripke semantics. For that reason we define maximally-consistent bases here.

\begin{definition}
    \label{MaxConDef}
    A base $\mathscr{B}$ is {\it maximally-consistent} iff it is consistent and for every base rule $\delta$, either $\delta \in \mathscr{B}$ or $\mathscr{B}\cup\{\delta\}$ is inconsistent.

\end{definition}

Further we show that if a formula does not hold at some base, there also has to be a maximally-consistent base at which it is not valid. This will prove useful in our proof of soundness later.

\begin{lemma}
\label{MaxCon}
For every propositional formula $\phi$, if there is a base $\mathscr{B}$ s.t. $\nVdash_\mathscr{B} \phi$, then there is a maximally-consistent base $\mathscr{B}^*\supseteq\mathscr{B}$ with $\nVdash_{\mathscr{B}^*} \phi$.
\end{lemma}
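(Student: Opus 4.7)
The plan is to realise $\mathscr{B}^*$ as the set of all base rules sound under a carefully chosen classical valuation, exploiting Makinson's correspondence \cite{Makinson2014} between maximally-consistent bases and classical valuations that was recalled above.

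First I would use the classical completeness of the B-eS summarised in Section~\ref{sec:classical-B-eS} to convert the hypothesis $\nVdash_\mathscr{B}\phi$ into the existence of a classical valuation $v$ that validates every rule of $\mathscr{B}$ (i.e., $v[L]\subseteq\{1\}\Rightarrow v(q)=1$ whenever $(L,q)\in\mathscr{B}$) and satisfies $v(\phi)=0$. Because $\nVdash_\mathscr{B}\phi$ forces $\mathscr{B}$ to be consistent and the atomic language is countably infinite, I may perturb $v$ on some atom occurring in neither $\mathscr{B}$ nor $\phi$ so as to guarantee $v$ makes at least one atom false, without disturbing soundness or the value of $\phi$. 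I then set
\[
\mathscr{B}^*\ :=\ \{(L,q)\ :\ v[L]\subseteq\{1\}\ \Rightarrow\ v(q)=1\},
\]
the collection of all rules sound under $v$; then $\mathscr{B}\subseteq\mathscr{B}^*$ is immediate, and a short induction on derivations gives $\overline{\mathscr{B}^*}=\{p:v(p)=1\}$, a proper subset of the atoms, so $\mathscr{B}^*$ is consistent.

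A structural induction on $\psi$ now yields $\Vdash_{\mathscr{B}^*}\psi\iff v(\psi)=1$ for every propositional $\psi$, from which $\nVdash_{\mathscr{B}^*}\phi$ follows. Maximal consistency then drops out of the definition of $\mathscr{B}^*$: if $(L,q)\notin\mathscr{B}^*$ then $L\subseteq\overline{\mathscr{B}^*}$ and $q\notin\overline{\mathscr{B}^*}$, so adjoining the rule derives $q$; and because $v(q)=0$, every rule of the form $(\{q\},p)$ is vacuously sound under $v$ and therefore already belongs to $\mathscr{B}^*$, triggering a cascade that derives every atom and collapses $\mathscr{B}^*\cup\{(L,q)\}$ into inconsistency.

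The main obstacle is the implication step of the structural induction $\Vdash_{\mathscr{B}^*}\psi\iff v(\psi)=1$, because the clause for $\to$ in Definition~\ref{satisfaction} quantifies over all extensions $\mathscr{C}\supseteq\mathscr{B}^*$, not just $\mathscr{B}^*$ itself. The remedy is to establish maximality of $\mathscr{B}^*$ first as a purely atomic property (via the cascade argument above), so that every proper extension of $\mathscr{B}^*$ is inconsistent; the quantifier then collapses to $\mathscr{C}=\mathscr{B}^*$ and the inductive step goes through uniformly.
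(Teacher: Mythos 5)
Your construction of $\mathscr{B}^*$ from a valuation $v$ --- taking all $v$-sound rules, computing $\overline{\mathscr{B}^*}=\{p:v(p)=1\}$, the cascade argument for maximality via the vacuously sound rules $(\{q\},p)$ with $v(q)=0$, and the observation that maximality collapses the extension quantifier in the $\to$ clause --- is correct and is precisely the valuation--base correspondence that Makinson \cite{Makinson2014} uses. The gap is in your very first step: you obtain a valuation $v$ that simultaneously falsifies $\phi$ \emph{and} makes every rule of $\mathscr{B}$ sound by appealing to ``the classical completeness of the B-eS''. No such theorem is available at this point. Section~\ref{sec:classical-B-eS} states no completeness result; the completeness of the classical fragment is established only later (Lemma~\ref{lem:classicalaxioms}), and its proof of axiom \emph{(3)} invokes the maximally-consistent-base lemma (via Lemma~\ref{lem:nonvalidMaxcon}); the same dependency holds in Makinson's own development. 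Moreover, completeness in its usual form is a statement about formulae ($\Gamma\Vdash\phi$ iff $\Gamma$ classically entails $\phi$), and extracting from it a valuation that respects the \emph{rules of a given base} requires exactly the base--valuation correspondence you are trying to prove. Granting your evaluation lemma $\Vdash_{\mathscr{B}^*}\psi\Leftrightarrow v(\psi)=1$, the claim ``$\nVdash_{\mathscr{B}}\phi$ implies there is a $\mathscr{B}$-sound $v$ with $v(\phi)=0$'' \emph{is} Lemma~\ref{MaxCon}, so the argument is circular. The repair is a structural induction on $\phi$: for $\phi=p$ or $\phi=\bot$ take $v(q)=1$ iff $q\in\overline{\mathscr{B}}$ (this $v$ is sound for $\mathscr{B}$ by closure and already falsifies the relevant atom, so no completeness is needed); for $\phi=\psi\to\chi$, the failure of $\Vdash_{\mathscr{B}}\psi\to\chi$ yields $\mathscr{C}\supseteq\mathscr{B}$ with $\Vdash_{\mathscr{C}}\psi$ and $\nVdash_{\mathscr{C}}\chi$, apply the induction hypothesis to $\chi$ at $\mathscr{C}$, and use monotonicity to keep $\psi$ valid at the resulting $\mathscr{C}^*$.

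A secondary problem is the perturbation step: you assume there is an atom occurring in neither $\mathscr{B}$ nor $\phi$, but a base is an arbitrary countable set of rules and may mention every atom (e.g.\ $\{q\Rightarrow q : q \text{ atomic}\}$ is consistent and omits no atom), so flipping ``an unused atom'' is not always possible and flipping a used one can destroy soundness of $\mathscr{B}$'s rules. In the inductive proof this issue does not arise, because the base-case valuation $v(q)=1$ iff $q\in\overline{\mathscr{B}}$ automatically makes the offending atom false whenever $\mathscr{B}$ is consistent.
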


\noindent A proof of Lemma \ref{MaxCon} can be found in \cite{Makinson2014}.

\subsection{Single-agent non-Euclidean modal logic} \label{subsec:modal}

For our modal language we extend the language for classical base-extension semantics with a $\square$ operator.

\begin{definition}
For atomic propositions $p$, the language for a modal logic $\gamma$ is generated by the following grammar:
\[
\begin{array}{rcl}
	\phi & := & p \mid \bot \mid \phi \rightarrow \phi \mid \square \phi 
\end{array}
\]
\end{definition}

Importantly, we do not change the make-up of our base rules and bases and so the definitions of those terms from Definition \ref{ClassicalRule} remain unchanged. 

The fundamental difference between the classical and modal accounts lies in the definition of validity. We no longer define validity at a base, but rather validity at a base given a relation over the set of bases. So, before moving to validity, we must define exactly the type of relations between bases we require.

We stress at this point that the need in the formulating base-extension semantics for modal logics for relational structures that are reminiscent of the relational structures taken in Kripke semantics does not imply that base-extension semantics is a form of Kripke semantics. This situation is similar to that encountered in 
the base-extension semantics on intutionistic propositional logic 
(e.g., \cite{Sandqvist2015}), where a superset relation on bases is required. 
In both settings, the essential content of base-extension semantics is its 
grounding in the provability of atoms in bases rather than the valuation of atoms in model-theoretic structures.  

\begin{definition} \label{OldmodalRelation}
A relation $\mathfrak{R}$ on the set of bases $\Omega$ is called a modal relation iff, for all $\mathscr{B}$,

\begin{enumerate}[label=(\alph*)]
    \item if $\Vdash_{\mathscr{B}} \bot$, then there is a $\mathscr{C}$ s.t. $\mathfrak{R}\mathscr{B}\mathscr{C}$ and $\Vdash_{\mathscr{C}} \bot$ and, for all $\mathscr{D}$, if $\mathfrak{R}\mathscr{B}\mathscr{D}$, then $\Vdash_{\mathscr{D}} \bot$ 
    \item if $\nVdash_{\mathscr{B}} \bot$, then, for all $\mathscr{C}$, s.t. $\mathfrak{R} \mathscr{B}\mathscr{C}$, $\nVdash_{\mathscr{C}} \bot$
    \item for all $\mathscr{C}$, if $\mathscr{B}$ is consistent and $\mathfrak{R}\mathscr{B}\mathscr{C}$, then either $\mathscr{B}$ is maximally-consistent or there is a $\mathscr{D}\supset \mathscr{B}$ s.t. $\mathfrak{R}\mathscr{D}\mathscr{C}$
    \item for all $\mathscr{C}$, if $\mathfrak{R}\mathscr{B}\mathscr{C}$, then, for all $\mathscr{D}\subseteq\mathscr{B}$, $\mathfrak{R}\mathscr{D}\mathscr{C}$.
\end{enumerate}

\noindent Given a modal logic $\gamma = K, K4, KT$, or $S4$, a modal relation $\mathfrak{R}_a$ is called a $\gamma$-modal relation iff $\mathfrak{R}_a$ satisfies the frame conditions corresponding to $\gamma$.
\end{definition}

Conditions (a) and (b) make sure that inconsistent and consistent worlds are not connected via $\mathfrak{R}$ and conditions (c) and (d) guarantee that the structure of $\mathfrak{R}$ is preserved when going from a base to its super- and sub-sets, respectively. For a more in-depth discussion of this definition see \cite{Eckhardt2024}.

We can now give a general definition of validity for the modal logics $K, KT, K4$ and $S4$, as they share the same validity conditions and only differ in the modal relations they consider.

\begin{definition}
 \label{OldEXTValidity}

We define validity at a base $\mathscr{B}$ given a $\gamma$-modal relation $\mathfrak{R}$ for a modal logic $\gamma$ as follows:
\[
\begin{array}{l@{\quad}c@{\quad}l}
\Vdash^\gamma_{\mathscr{B},\mathfrak{R}} p   & \mbox{iff} & \mbox{$p$ is in every set of atomic formulae closed under $\mathscr{B}$} \\  
        & & \mbox{(i.e., iff $p\in \overline{\mathscr{B}}$)} \\
\Vdash^\gamma_{\mathscr{B},\mathfrak{R}} \phi\to\psi & \mbox{iff} & \mbox{$\phi\Vdash^\gamma_{\mathscr{B},\mathfrak{R}} \psi$} \\ 
\Vdash^\gamma_{\mathscr{B},\mathfrak{R}} \bot & \mbox{iff} & \mbox{$\Vdash^\gamma_{\mathscr{B},\mathfrak{R}} p$ for every atomic formula $p$} \\
\Vdash^\gamma_{\mathscr{B},\mathfrak{R}} \square \phi & \mbox{iff} & \mbox{for all $\mathscr{C}\supseteq \mathscr{B}$ and $\mathscr{C'}$ s.t. $\mathfrak{R}\mathscr{C}\mathscr{C'}$, $\Vdash^\gamma_{\mathscr{C'},\mathfrak{R}} \phi$}\\
& & \\
 \mbox{For non empty $\Gamma$:} & &\\
\Gamma\Vdash^\gamma_{\mathscr{B},\mathfrak{R}_A} \phi & \mbox{iff} & \mbox{for all $\mathscr{C}\supseteq \mathscr{B}$, if $\Vdash^\gamma_{\mathscr{C},\mathfrak{R}_A} \psi$ for all $\psi \in \Gamma$, then $\Vdash^\gamma_{\mathscr{C},\mathfrak{R}_A} \phi$}\\ 

\end{array} 
\]








\noindent A formula $\phi$ is \textit{$\gamma$-valid}, written as $\Vdash^\gamma \phi$, iff $\Vdash^\gamma_{\mathscr{B}, \mathfrak{R}} \phi$ for all modal bases $\mathscr{B}$ and $\gamma$-modal relations $\mathfrak{R}$. 
 A base $\mathscr{B}$ is {\it inconsistent} in $\gamma$ iff $\Vdash^\gamma_{\mathscr{B}, \mathfrak{R}_A} \bot$ for any $\gamma$-modal relation $\mathfrak{R}_A$.


\end{definition}

\section{Base-extension semantics for $S5$} \label{sec:B-eS-S5}

Our presentation of modal base-extension semantics for the $S5$ multi-agent modal logic only differs in a few key places from the base-extension semantics for modal logics given above. In our presentation, perhaps the most obvious difference is that we use, in deference to convention,  $K_a$ as the knowledge operator for an agent $a$ to replace $\square$. 
As we want to consider the knowledge of multiple agents, we add a modal operator $K_a$ for every agent $a$ in a given set of agents $A$, as we did for Kripke semantics. Correspondingly, we consider relations between bases $\mathfrak{R}_a$ for each agent $a$. Again this relation can be interpreted as indistinguishability so that $\mathfrak{R}_a \mathscr{B}\mathscr{C}$ iff agent $a$ cannot distinguish between bases $\mathscr{B}\mathscr{C}$.


For single agent $a$, so that the relation is just an unindexed $R$, this modality reduces to the usual S5 necessitation, $\square$. It follows that our analysis incorporates this case.  

The remainder of this section develops a base-extension semantics for multi-agent $S5$ by adapting the definitions from Section \ref{sec:classical-B-eS}. We begin by setting up  validity and give a definition of modal relation capable of handling $S5$. We proceed to establish completeness and soundness in Section \ref{sec:completeness}. 

\begin{definition}
    For atomic propositions $p$ and agents $a$, the language for the epistemic modal logic is generated by the following grammar:

    \[\phi := p \mid \bot \mid \phi\to\phi \mid K_a \phi\]
\end{definition}

Importantly, the definitions for base rules and bases remain unchanged from the classical semantics and the modal semantics presented above as given in Definition \ref{ClassicalRule}. We now no longer need to simply consider a single modal relation $\mathfrak{R}$ but rather a $\mathfrak{R}_a$ for every agent $a \in A$. So, we start by defining validity at a base given a set of relations and give validity (proper) based on this notion. We write $\Vdash$ instead of $\Vdash^{S5}$ as we do not consider any other modal logic in the rest of this paper.

\begin{definition}
 \label{EXTValidity}

For the epistemic modal logic, we define validity at a base $\mathscr{B}$ given a set of agents $A$ and a set of $S5$-modal relations $\mathfrak{R}_a$, one for every agent $a \in A$, denoted by $\mathfrak{R}_A$ as follows:
\[
\begin{array}{l@{\quad}c@{\quad}l}
\Vdash_{\mathscr{B},\mathfrak{R}_A} p   & \mbox{iff} & \mbox{$p$ is in every set of atomic formulae closed under $\mathscr{B}$} \\  
        & & \mbox{(i.e., iff $p\in \overline{\mathscr{B}}$)} \\
       
\Vdash_{\mathscr{B},\mathfrak{R}_A} \phi\to\psi & \mbox{iff} & \mbox{$\phi\Vdash_{\mathscr{B},\mathfrak{R}_A} \psi$} \\ 
\Vdash_{\mathscr{B},\mathfrak{R}_A} \bot & \mbox{iff} & \mbox{$\Vdash_{\mathscr{B},\mathfrak{R}_A} p$ for every atomic formula $p$} \\
\Vdash_{\mathscr{B},\mathfrak{R}_A} K_a \phi & \mbox{iff} & \mbox{for all $\mathscr{C}$ s.t. $\mathfrak{R}_a\mathscr{B}\mathscr{C}$, $\Vdash_{\mathscr{C},\mathfrak{R}_A} \phi$}\\
& & \\
 \mbox{For non empty $\Gamma$:} & &\\
\Gamma\Vdash_{\mathscr{B},\mathfrak{R}_A} \phi & \mbox{iff} & \mbox{for all $\mathscr{C}\supseteq \mathscr{B}$, if $\Vdash_{\mathscr{C},\mathfrak{R}_A} \psi$ for all $\psi \in \Gamma$, then $\Vdash_{\mathscr{C},\mathfrak{R}_A} \phi$}\\ 
\end{array} 
\]








\noindent A formula $\phi$ is \textit{valid}, written as $\Vdash \phi$, iff $\Vdash_{\mathscr{B}, \mathfrak{R}_A} \phi$ for all modal bases $\mathscr{B}$ and set of modal relations $\mathfrak{R}_A$. A base $\mathscr{B}$ is {\it inconsistent} (in $S5$) iff $\Vdash_{\mathscr{B},\mathfrak{R}_A} \bot$ for any $\mathfrak{R}_A$.


\end{definition}

As one might expect, the definition of validity for the classical connectives remains unchanged. The new case for $K_a \phi$ is defined in much the same way as it is in Kripke semantics: by universal quantification over the bases that are in relation to the base we started at. There are, however, two differences in our presentation. As mentioned above, we have a $K_a$ operator for every agents $a \in A$ and so have different $\mathfrak{R}_a$ to consider. Additionally, in Definition \ref{OldEXTValidity} we required the validity condition for $\square$ as it is here to hold for all supersets of $\mathscr{B}$ but, as follows, we also change the definition of modal relation from Definition \ref{OldmodalRelation} and it is no longer required. 

This is due to the way we changed the conditions for modal relations from Definition \ref{OldmodalRelation}. We give the new conditions below in Definition \ref{modalRelation}. We can not just allow for any relation on the set of bases to be a modal relation $\mathfrak{R}$. 

First, as in Kripke semantics, putting restrictions in form of the well-known frame conditions of modal logic on $\mathfrak{R}$ will allow us to model different modal logics. 

Here we want to give a base-extension semantics for $S5$ modal logic as our starting off point for PAL. As might be expected, requiring $\mathfrak{R}$ to be reflexive, transitive, and Euclidean will be necessary to get a base-extension semantics for $S5$ modal logic. We show this below as part of our completeness proof.

There are, however, a couple more restrictions that need to be put on $\mathfrak{R}$, regardless of the specific modal logic. In comparison to Kripke models, there is more inherent structure embedded into base-extension semantics. We now have two types of relations between bases: $\subseteq$ and $\mathfrak{R}_a$ and we have inconsistent bases to deal with. We impose conditions to ensure that these behave appropriately. 

For this we update the conditions (a)-(d). Conditions (c) and (d) are the big difference from the presentation in Definition \ref{OldmodalRelation}. 

In Definition \ref{OldmodalRelation}, the ideas behind (c) and (d) are that if a base $\mathscr{C}$ is considered possible at a base $\mathscr{B}$ (i.e., is reachable via a modal relation $\mathfrak{R}$), it has to have been reachable from all the subsets and at least one of the supersets of $\mathscr{B}$. This captures that adding rules to a base reduces the number of 
bases reachable from it. 

This view, however, causes the $5$ axiom and Euclidean to no longer correspond to each other, as we have shown in \cite{Eckhardt2024}. 

In fact, given an equivalence relation such a relation will collapse to a universal relation between bases. At every base $\mathscr{D}$, we have $\mathfrak{R}_a\mathscr{D}\mathscr{D}$ and $\mathfrak{R}_a\emptyset\emptyset$, by reflexivity. Note that $\mathscr{D}\supseteq\emptyset$ and so, by condition (c), we also have $\mathfrak{R}_a\emptyset\mathscr{D}$ and, by symmetry, $\mathfrak{R}_a\mathscr{D}\emptyset$. So for any two bases $\mathscr{D}_1$ and $\mathscr{D}_2$ we get $\mathfrak{R}_a\mathscr{D}_1\emptyset$ and $\mathfrak{R}_a\emptyset\mathscr{D}_2$ and, finally, $\mathfrak{R}_a\mathscr{D}_1\mathscr{D}_2$ by transitivity.

Intuitively, we require conditions (c) and (d) that better preserve the structure of a relation between sub- and super-set bases.

We present the following new conditions for a modal relation to address this issue.


\begin{definition}
\label{modalRelation}
A relation $\mathfrak{R}_a$ on the set of bases $\Omega$ is called a  $S5$-modal relation iff, for all $\mathscr{B}$,


\begin{enumerate}[label=(\alph*)]
    \item if $\Vdash_{\mathscr{B}} \bot$, there is a $\mathscr{C}$ s.t. $\mathfrak{R}_a\mathscr{B}\mathscr{C}$ and $\Vdash_{\mathscr{C}} \bot$ and, for all $\mathscr{D}$, if $\mathfrak{R}_a\mathscr{B}\mathscr{D}$, then $\Vdash_{\mathscr{D}} \bot$ 

    \item if $\nVdash_{\mathscr{B}} \bot$, then for all $\mathscr{C}$, s.t. $\mathfrak{R}_a \mathscr{B}\mathscr{C}$, $\nVdash_{\mathscr{C}} \bot$

    \item for all $\mathscr{C}$, if $\mathfrak{R}_a\mathscr{B}\mathscr{C}$, then, for all consistent $\mathscr{D}\supseteq \mathscr{B}$, there is an $\mathscr{E}\supseteq\mathscr{C}$ s.t. $\mathfrak{R}_a\mathscr{D}\mathscr{E}$

    \item for all consistent $\mathscr{C}$, if $\mathfrak{R}_a\mathscr{B}\mathscr{C}$, then, for all $\mathscr{D}\subseteq\mathscr{B}$, there are $\mathscr{E}\subseteq\mathscr{C}$ s.t. $\mathfrak{R}_a\mathscr{D}\mathscr{E}$.


\end{enumerate}

\noindent A modal relation $\mathfrak{R}$ is called a $S5$-modal relation iff $\mathfrak{R}$ is reflexive, transitive, and Euclidean.

\end{definition}

Conditions (a) and (b) govern inconsistent bases by making sure that inconsistent bases all have access to at least one other inconsistent base and no consistent bases (thereby forcing $K_a \bot$ and $\neg K_a\neg\bot$ at them) and that no consistent base can have access to an inconsistent one, much like no world in Kripke semantics can have access to an inconsistent world(see Figure \ref{11(ab)}).

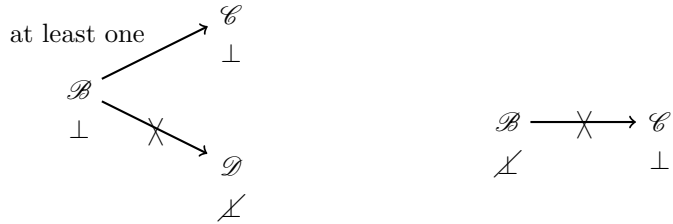
\begin{figure}[ht]
\begin{center}
\begin{tabular}{cc}

\begin{tikzpicture}[-,shorten >=1pt,node distance =1cm, thick] 
			\node[label=below:{$\bot$}]  (A) at (0,1) {$\mathscr{B}$};
			\node[label=below:{$\bot$}] (C) at (2,2) {$\mathscr{C}$};
			\node[label=below:{$\cancel{\bot}$}] (D) at (2,0) {$\mathscr{D}$};
			\path

(A) edge[->] node[above] {at least one\qquad\:\;\;\:\;\;\;\;\:\;\:\:\;\;\;     } (C)
;		
\draw[->] (A) to node {$\xcancel{\:\;}$} (D);

		\end{tikzpicture}

&

\begin{tikzpicture}[-,shorten >=1pt,node distance =2cm, thick] 

\node[label=below:{$\;$}] (Z) at (0,0) {$\;$};
\node[label=below:{$\cancel{\bot}$}] (A) at (2,1) {$\mathscr{B}$};
\node[label=below:{$\bot$}] (B) [right of=A] {$\mathscr{C}$};

\draw[->] (A) to node {$\xcancel{\:\;}$} (B);

\end{tikzpicture}

\end{tabular}
\end{center}
\caption{\label{11(ab)} Illustration of Definition \ref{modalRelation} (a) on the left and (b) on the right}
\end{figure}

Additionally, we need conditions that ensure that the structure of $\mathfrak{R}_a$ is preserved to super- and subset bases. The conditions (c) and (d), as they are given here, directly correspond to the \emph{zig and zag} conditions for bisimulation (see, for example \cite{Blackburn2001}). This makes them obvious candidates for restrictions to put on modal relations as zig and zag similarly guarantee that the structure of a relation is preserved when going back or forth on the bisimulation. 

Given the reading of our logic as an epistemic logic, there is another interesting observation about (c) and (d). In game theory~\cite{Kuhn1953} and dynamic logics~\cite{vanBenthem2001} there are two related notions that formally coincide with zig and zag: \emph{perfect recall} and \emph{no learning}. Perfect recall expresses the idea that no information is lost from previous states. If we interpret a superset base as one that has more information in it, because it has more rules in it, (d) ensures that no bases are considered that were not already considered before and so, that nothing that was previously known becomes lost by adding those new rules. No learning is the dual of perfect recall and expresses the idea that no information about the possibility of bases is gained by adding these rules and so directly corresponds to (c). Perfect recall and \emph{no miracles}, a weaker version of no learning that expresses that no learning of unrelated information can occur, are inherent to updates in PAL and DEL~\cite{Wang2013,Eckhardt2021}. We return to a brief discussion of further work on PAL in Section~\ref{sec:discussion}.

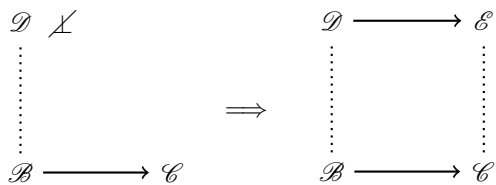
\begin{figure}[ht]
\begin{center}

\begin{tabular}{c p{.7cm} c}

\begin{tikzpicture}[-,shorten >=1pt,node distance =2cm, thick, baseline={([yshift=-1.8ex]current bounding box.center)}] 

\node (A) {$\mathscr{B}$};
\node (B) [right of=A] {$\mathscr{C}$};
\node[label=right:{$\cancel{\bot}$}] (C) [above of=A] {$\mathscr{D}$};

\draw[->] (A) to node {} (B);
\draw[dotted, right] (A) to node {} (C);

\end{tikzpicture}

&

$\Longrightarrow$

&

\begin{tikzpicture}[-,shorten >=1pt,node distance =2cm, thick, baseline={([yshift=-1.8ex]current bounding box.center)}] 

\node (A) {$\mathscr{B}$};
\node (B) [right of=A] {$\mathscr{C}$};
\node (C) [above of=A] {$\mathscr{D}$};
\node (D) [right of=C] {$\mathscr{E}$};

\draw[->] (A) to node {} (B);
\draw[dotted, right] (A) to node {} (C);
\draw[->, right] (C) to node {} (D);
\draw[dotted, right] (B) to node {} (D);

\end{tikzpicture}

\end{tabular}
\end{center}
\caption{\label{11(c)} Illustration of Definition \ref{modalRelation} (c) with dotted lines representing the subset relation}

\end{figure}

\begin{figure}[ht]
\begin{center}
\begin{tabular}{c p{.7cm} c}

\begin{tikzpicture}[-,shorten >=1pt,node distance =2cm, thick, baseline={([yshift=-1.8ex]current bounding box.center)}] 

\node (A) {$\mathscr{D}$};
\node (C) [above of=A] {$\mathscr{B}$};
\node (D) [right of=C] {$\mathscr{C}$};

\draw[->] (C) to node {} (D);
\draw[dotted, right] (A) to node {} (C);

\end{tikzpicture}

& 

$\Longrightarrow$

&

\begin{tikzpicture}[-,shorten >=1pt,node distance =2cm, thick, baseline={([yshift=-1.8ex]current bounding box.center)}] 

\node (A) {$\mathscr{D}$};
\node (B) [right of=A] {$\mathscr{E}$};
\node (C) [above of=A] {$\mathscr{B}$};
\node (D) [right of=C] {$\mathscr{C}$};

\draw[->] (A) to node {} (B);
\draw[dotted, right] (A) to node {} (C);
\draw[->, right] (C) to node {} (D);
\draw[dotted, right] (B) to node {} (D);

\end{tikzpicture}

\end{tabular}

\caption{\label{11(d)} Illustration of Definition \ref{modalRelation} (d) with dotted lines representing the subset relation}
\end{center}
\end{figure}

\iffalse{
\begin{definition} \label{def:classical-nd}
The natural deduction system for classical propositional logic is given by the following rules:

\begin{center}
\AxiomC{$[\phi]$}
\noLine
\UnaryInfC{$\psi$}
\RightLabel{$(\to I)$}
\UnaryInfC{$\phi\to\psi$}
\DisplayProof
\quad
\AxiomC{$\phi\to \psi$}
\AxiomC{$\phi$}
\RightLabel{$(\to E)$}
\BinaryInfC{$\psi$}
\DisplayProof
\end{center}

\begin{prooftree}
\AxiomC{$[\phi\to\bot]$}
\noLine
\UnaryInfC{$\bot$}
\RightLabel{$(\bot_c)$}
\UnaryInfC{$\phi$}
\end{prooftree}

If we just have the rules $(\to I)$ and $(\to E)$, we have a natural deduction system for minimal propositional logic.
  
\end{definition}
}\else

We can now show that some of the most important properties of base-extension semantics can be generalized to our new system. The end goal is to show that our semantics is in fact sound and complete with respect to different Euclidean modal logics. The general idea for this will, again, be close to what was shown for the non-Euclidean base-extension semantics in \cite{Eckhardt2024} but given our changed definition of modal relation we have to make sure that all steps of these proofs still hold. Additionally, as the axiomatic systems for modal logic are easier to work with, we showed that the modal axioms are valid but followed \cite{Sandqvist2005,Sandqvist2009} in showing the classical fragment with respect to a natural deduction proof system in \cite{Eckhardt2024}. Here we show completeness with respect to the axiomatic systems 
for modal logic rather than the natural deduction system. 

As mentioned above, we no longer need to consider superset bases for the validity of formulae of the form $K_a\phi$ (as in \cite{Eckhardt2024}). This is because the condition (d) guarantees that if $K_a \phi$ holds at a base, then it must also hold at its supersets. We show this as part of Lemma \ref{ModalMonotonicity}. The appeal to superset bases in the validity condition for implications, say $\phi\to\psi$, is essentially a hypothetical one: we check all extensions in which $\phi$ hold and see whether $\psi$ holds at those as well. There is no such hypothetical reasoning required for $K_a\phi$. We simply care about the bases reachable by agent $a$ from our initial base. This suggests that the conditions here are more suited for a modal base-extension semantics as the ones used in \cite{Eckhardt2024}. 


\begin{lemma}
    \label{ModalMonotonicity}

If $\Gamma\Vdash_{\mathscr{B},\mathfrak{R}_A} \phi$ and $\mathscr{B}\subseteq\mathscr{C}$, then $\Gamma\Vdash_{\mathscr{C},\mathfrak{R}_A} \phi$.
    
\end{lemma}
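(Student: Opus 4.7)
The plan is to prove this by induction on the structure of $\phi$, first reducing to the case $\Gamma = \emptyset$. For non-empty $\Gamma$, the statement is essentially immediate from the definition: $\Gamma\Vdash_{\mathscr{B},\mathfrak{R}_A} \phi$ universally quantifies over all $\mathscr{D}\supseteq\mathscr{B}$, and since every $\mathscr{D}\supseteq\mathscr{C}$ is in particular a superset of $\mathscr{B}$, the required implication at $\mathscr{C}$ is a special case of the implication at $\mathscr{B}$. So it suffices to show: if $\Vdash_{\mathscr{B},\mathfrak{R}_A}\phi$ and $\mathscr{B}\subseteq\mathscr{C}$, then $\Vdash_{\mathscr{C},\mathfrak{R}_A}\phi$.

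For the atomic case, note that $\mathscr{B}\subseteq\mathscr{C}$ implies $\overline{\mathscr{B}}\subseteq\overline{\mathscr{C}}$, so $p\in\overline{\mathscr{B}}$ yields $p\in\overline{\mathscr{C}}$. The $\bot$ case follows from the atomic case via its clause. The implication case $\psi\to\chi$ is standard: unfold $\Vdash_{\mathscr{B},\mathfrak{R}_A}\psi\to\chi$ to its universal form over $\mathscr{D}\supseteq\mathscr{B}$, and observe that every $\mathscr{D}\supseteq\mathscr{C}$ also satisfies $\mathscr{D}\supseteq\mathscr{B}$.

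The main case is $\phi = K_a\psi$, and this is where condition (d) of Definition \ref{modalRelation} does the work. Suppose $\Vdash_{\mathscr{B},\mathfrak{R}_A} K_a\psi$; we show $\Vdash_{\mathscr{C},\mathfrak{R}_A} K_a\psi$ by taking an arbitrary $\mathscr{E}$ with $\mathfrak{R}_a\mathscr{C}\mathscr{E}$ and proving $\Vdash_{\mathscr{E},\mathfrak{R}_A}\psi$. Split on whether $\mathscr{E}$ is consistent. If $\mathscr{E}$ is consistent, then applying condition (d) to $\mathfrak{R}_a\mathscr{C}\mathscr{E}$ with the subset $\mathscr{B}\subseteq\mathscr{C}$ yields some $\mathscr{F}\subseteq\mathscr{E}$ with $\mathfrak{R}_a\mathscr{B}\mathscr{F}$. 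The hypothesis $\Vdash_{\mathscr{B},\mathfrak{R}_A} K_a\psi$ gives $\Vdash_{\mathscr{F},\mathfrak{R}_A}\psi$, and since $\mathscr{F}\subseteq\mathscr{E}$ the induction hypothesis delivers $\Vdash_{\mathscr{E},\mathfrak{R}_A}\psi$.

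If instead $\mathscr{E}$ is inconsistent, we invoke the auxiliary fact that every formula is valid at an inconsistent base. This fact itself follows by a straightforward induction on formula structure using the clauses of Definition \ref{EXTValidity} together with conditions (a) and (b), which ensure that every base $\mathfrak{R}_a$-reachable from an inconsistent base is itself inconsistent. The main obstacle is precisely the interaction of the $K_a$-case with inconsistent bases in the target of $\mathfrak{R}_a$: one must be careful that the zig-style condition (d) is only stated for consistent $\mathscr{E}$, and so cannot be used directly there. The inconsistent-everywhere lemma is exactly what is needed to close this gap cleanly.
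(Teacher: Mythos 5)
Your proof is correct and takes essentially the same approach as the paper's: induction on $\phi$, with the non-empty-$\Gamma$ case immediate from the definition, the propositional cases as in the classical setting, and the $K_a$ case handled by applying condition (d) of Definition \ref{modalRelation} to obtain a subset witness and then invoking the induction hypothesis. You are in fact slightly more careful than the paper, which applies condition (d) without remarking that it is stated only for consistent target bases; your explicit case split on the consistency of $\mathscr{E}$, closed via the ex-falso-at-inconsistent-bases fact (the paper's Lemma \ref{EFQ}, whose proof is independent of monotonicity, so no circularity arises), fills that small gap.
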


\begin{proof}
    This proof goes by induction on the complexity of $\phi$. If $\Gamma$ is not empty, this follows immediately from the validity conditions. 
    
    For empty $\Gamma$, the propositional cases remain unchanged from the proof for classical propositional logic in \cite{Sandqvist2009}. The new case is the case in which $\phi=K_a\psi$. In order for $\Vdash_{\mathscr{B},\mathfrak{R}_A} K_a\psi$, at all $\mathscr{D}$ s.t. $\mathfrak{R}_a\mathscr{B}\mathscr{D}$, $\Vdash_{\mathscr{D},\mathfrak{R}_A} \psi$.
    By condition (d) of Definition \ref{modalRelation}, we know that for every $\mathscr{E}$ s.t. $\mathfrak{R}_a\mathscr{C}\mathscr{E}$, there is such a $\mathscr{D}$ with $\mathscr{D}\subseteq\mathscr{E}$. By induction hypothesis $\Vdash_{\mathscr{E},\mathfrak{R}_A} \psi$ and, since this holds for all $\mathscr{E}$ s.t. $\mathfrak{R}_a\mathscr{C}\mathscr{E}$, we conclude $\Vdash_{\mathscr{C},\mathfrak{R}_A} K_a\psi$.
\end{proof}

Additionally, the conditions of Definition \ref{modalRelation} guarantee {\it ex falso quod libet} in the sense that any formula $\phi$ follows from $\bot$.

\begin{lemma}
    \label{EFQ}
    For all inconsistent bases $\mathscr{B}$ and $\mathfrak{R}_A$, $\Vdash_{\mathscr{B},\mathfrak{R}_A}\phi$ for all $\phi$.
    
\end{lemma}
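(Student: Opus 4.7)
The plan is to prove the lemma by induction on the complexity of $\phi$, exploiting the validity conditions from Definition \ref{EXTValidity} together with the clauses (a)--(b) of Definition \ref{modalRelation} to push inconsistency through both $\subseteq$-extensions and $\mathfrak{R}_a$-successors.

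For the base case $\phi = p$, the claim is immediate: by Definition \ref{EXTValidity}, $\Vdash_{\mathscr{B},\mathfrak{R}_A}\bot$ means $\Vdash_{\mathscr{B},\mathfrak{R}_A} p$ for every atomic $p$, which is exactly what we need. The case $\phi = \bot$ is the hypothesis. For $\phi = \psi\to\chi$, I would observe that inconsistency is preserved under $\subseteq$: if $\Vdash_{\mathscr{B},\mathfrak{R}_A}\bot$, then $p\in\overline{\mathscr{B}}$ for every atom $p$, hence $p\in\overline{\mathscr{C}}$ for every $\mathscr{C}\supseteq\mathscr{B}$, so $\mathscr{C}$ is inconsistent as well. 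By the induction hypothesis applied to $\chi$, we obtain $\Vdash_{\mathscr{C},\mathfrak{R}_A}\chi$ for every such $\mathscr{C}$, whence $\psi\Vdash_{\mathscr{B},\mathfrak{R}_A}\chi$ and therefore $\Vdash_{\mathscr{B},\mathfrak{R}_A}\psi\to\chi$.

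The interesting step is $\phi = K_a\psi$. Here I would appeal to clause (a) of Definition \ref{modalRelation}: since $\Vdash_{\mathscr{B}}\bot$, every $\mathscr{C}$ with $\mathfrak{R}_a\mathscr{B}\mathscr{C}$ satisfies $\Vdash_{\mathscr{C}}\bot$, i.e., is itself inconsistent. The induction hypothesis applied to $\psi$ then gives $\Vdash_{\mathscr{C},\mathfrak{R}_A}\psi$ for every such $\mathscr{C}$, so $\Vdash_{\mathscr{B},\mathfrak{R}_A}K_a\psi$ by the modal clause.

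The only mild subtlety, and the main thing I would take care to get right, is the book-keeping between $\Vdash_{\mathscr{B}}\bot$ as it appears in Definition \ref{modalRelation} and $\Vdash_{\mathscr{B},\mathfrak{R}_A}\bot$ as it appears in Definition \ref{EXTValidity}; both unfold to $p\in\overline{\mathscr{B}}$ for all atoms $p$, so they coincide and can be used interchangeably throughout. Beyond this, no further obstacle is expected: the induction is driven by the $\subseteq$-closure of $\overline{(\,\cdot\,)}$ in the implication case and by clause (a) in the modal case, both of which are tailored precisely to make \emph{ex falso quodlibet} hold.
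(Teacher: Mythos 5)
Your proposal is correct and follows essentially the same route as the paper's own proof: induction on the complexity of $\phi$, with the propositional cases discharged directly from the validity conditions (using that inconsistency persists to $\subseteq$-supersets) and the $K_a\psi$ case discharged via clause (a) of Definition \ref{modalRelation} plus the induction hypothesis. The paper compresses the propositional cases into a single sentence, so your version merely spells out details the authors leave implicit.
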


\begin{proof}
    This proof goes by induction on the complexity of the formula $\phi$. For propositional $\phi$ this follows immediately from the validity conditions in Definition \ref{EXTValidity}. For the case in which $\phi=K_a\psi$, the condition (a) ensures that any inconsistent base can only have access to other inconsistent bases and, by induction hypothesis, $\psi$ holds at these bases.
\end{proof}

In Definition \ref{MaxConDef}, we defined what it means for a base to be maximally-consistent. In \cite{Eckhardt2024} we used maximally-consistent bases to prove soundness for a variety of modal base-extension semantics and we follow this strategy again here. This is because, if we carefully pick $\mathfrak{R}_A$, maximally-consistent bases closely correspond to worlds in Kripke semantics. 

In \cite{Makinson2014}, Makinson shows that for classical propositional base-extension semantics, formulae on maximally-consistent bases can be evaluated using their classical valuations. In Lemma \ref{ModalBehaviour}, this result is generalized to show that formulae on maximally-consistent bases can be evaluated using validity conditions that directly correspond to the truth conditions  in Kripke semantics in our modal base-extension semantics. 

\begin{lemma}
\label{ModalBehaviour}
For any set of $S5$-modal relations $\mathfrak{R}_A$ and maximally-consistent base $\mathscr{B}$, the following hold:

\begin{itemize}
    \item $\nVdash_{\mathscr{B},\mathfrak{R}_A} \bot$, and
    \item $ \Vdash_{\mathscr{B},\mathfrak{R}_A} \phi \to \psi$ iff $\nVdash_{\mathscr{B},\mathfrak{R}_A} \phi$ or $\Vdash_{\mathscr{B},\mathfrak{R}_A} \psi$.
\end{itemize}

\end{lemma}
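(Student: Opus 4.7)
The plan is to treat the two bullets separately, using that the modal validity condition for $\bot$ syntactically matches the classical one and that maximal consistency forces every proper superset of $\mathscr{B}$ to be inconsistent.

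For the first bullet, I would unfold definitions. By Definition~\ref{EXTValidity}, $\Vdash_{\mathscr{B},\mathfrak{R}_A}\bot$ holds iff $\Vdash_{\mathscr{B},\mathfrak{R}_A} p$ for every atom $p$, which by the atomic clause is the same as $p\in\overline{\mathscr{B}}$ for every $p$. This is exactly the classical inconsistency condition from Definition~\ref{satisfaction}. Since $\mathscr{B}$ is maximally-consistent it is (classically) consistent by Definition~\ref{MaxConDef}, so $\nVdash_{\mathscr{B},\mathfrak{R}_A}\bot$.

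For the second bullet, the right-to-left direction splits into two cases. If $\Vdash_{\mathscr{B},\mathfrak{R}_A}\psi$, then by Lemma~\ref{ModalMonotonicity} we get $\Vdash_{\mathscr{C},\mathfrak{R}_A}\psi$ for every $\mathscr{C}\supseteq\mathscr{B}$, and the implication clause is immediate. If $\nVdash_{\mathscr{B},\mathfrak{R}_A}\phi$, I would use the maximality of $\mathscr{B}$ to show that any $\mathscr{C}\supsetneq\mathscr{B}$ is inconsistent: pick any base rule $\delta\in\mathscr{C}\setminus\mathscr{B}$; by Definition~\ref{MaxConDef}, $\mathscr{B}\cup\{\delta\}$ is inconsistent, and since $\mathscr{C}\supseteq\mathscr{B}\cup\{\delta\}$, closure-monotonicity of $\overline{(\cdot)}$ makes $\mathscr{C}$ inconsistent as well. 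By Lemma~\ref{EFQ}, $\Vdash_{\mathscr{C},\mathfrak{R}_A}\psi$ for every such proper superset; and for $\mathscr{C}=\mathscr{B}$ the antecedent $\Vdash_{\mathscr{C},\mathfrak{R}_A}\phi$ already fails by assumption. Combining the cases gives $\Vdash_{\mathscr{B},\mathfrak{R}_A}\phi\to\psi$. The left-to-right direction is immediate: instantiating the clause for $\phi\to\psi$ at $\mathscr{C}=\mathscr{B}$ shows that if $\Vdash_{\mathscr{B},\mathfrak{R}_A}\phi$ then $\Vdash_{\mathscr{B},\mathfrak{R}_A}\psi$, which is the contrapositive of what we need.

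The only real subtlety, and the step I would be most careful about, is the argument that maximality passes from the single-rule extension of Definition~\ref{MaxConDef} to arbitrary proper supersets; this rests on the monotonicity of inconsistency (adding rules can only increase $\overline{\mathscr{B}}$), which is a standard property of the closure operator and can be noted in passing. Everything else is a direct unpacking of definitions against Lemmas~\ref{ModalMonotonicity} and \ref{EFQ}.
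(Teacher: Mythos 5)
Your proof is correct and follows essentially the same route as the paper, which simply observes that the $\to$ and $\bot$ clauses do not depend on $\mathfrak{R}_A$ and defers to Makinson's classical argument. Your version is in fact slightly more careful than the paper's citation, since for modal subformulae one must invoke the modal Lemmas~\ref{ModalMonotonicity} and~\ref{EFQ} (rather than their classical counterparts), and your handling of the step from single-rule extensions in Definition~\ref{MaxConDef} to arbitrary proper supersets via monotonicity of $\overline{(\cdot)}$ is exactly the right justification.
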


\begin{proof}
    As the validity conditions for $\to$ and $\bot$ do not depend on $\mathfrak{R}_A$  the proof remains unchanged from \cite{Makinson2014}.
\end{proof}

From this it follows, maybe unsurprisingly, that the {\it law of excluded middle} holds at maximally-consistent bases.

\begin{lemma}
    \label{lem:excludedmiddle}
    For every formula $\phi$, maximally-consistent base $\mathscr{B}$, and set of $S5$-modal relation $\mathfrak{R}_A$, either $\Vdash_{\mathscr{B},\mathfrak{R}_A} \phi$ or $\Vdash_{\mathscr{B},\mathfrak{R}_A} \phi\to\bot$.
\end{lemma}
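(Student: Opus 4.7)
The statement is essentially an immediate corollary of Lemma \ref{ModalBehaviour}, so the plan is just to spell out how that lemma applies. The key observation is that the second clause of Lemma \ref{ModalBehaviour}, instantiated with $\psi = \bot$, says precisely that $\Vdash_{\mathscr{B},\mathfrak{R}_A} \phi \to \bot$ holds at a maximally-consistent base iff $\nVdash_{\mathscr{B},\mathfrak{R}_A} \phi$ or $\Vdash_{\mathscr{B},\mathfrak{R}_A} \bot$.

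The plan is to split on classical bivalence at the meta-level: either $\Vdash_{\mathscr{B},\mathfrak{R}_A} \phi$ holds or it does not. In the first case, the left disjunct of the conclusion is obtained directly. In the second case, we have $\nVdash_{\mathscr{B},\mathfrak{R}_A} \phi$, so the right-hand side of the biconditional in the second bullet of Lemma \ref{ModalBehaviour} is satisfied (with $\psi = \bot$), which yields $\Vdash_{\mathscr{B},\mathfrak{R}_A} \phi \to \bot$, i.e., the right disjunct.

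There is no real obstacle here: the induction on the complexity of $\phi$ was already carried out in establishing Lemma \ref{ModalBehaviour} (and earlier in \cite{Makinson2014} for the propositional fragment, which transfers because the semantic clauses for $\to$ and $\bot$ do not depend on $\mathfrak{R}_A$), so maximal-consistency is exactly the property that collapses the semantic behaviour of implication at $\mathscr{B}$ to the classical one. Consistency of $\mathscr{B}$, which rules out $\Vdash_{\mathscr{B},\mathfrak{R}_A} \bot$ by the first bullet of Lemma \ref{ModalBehaviour}, is not even needed for this statement: either branch of the disjunction is produced without having to exclude the degenerate case. The proof is therefore just a two-line case split invoking Lemma \ref{ModalBehaviour}.
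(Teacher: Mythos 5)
Your proof is correct and matches the paper's intent: the paper itself introduces this lemma with ``From this it follows,'' referring to Lemma \ref{ModalBehaviour}, and your case split on whether $\Vdash_{\mathscr{B},\mathfrak{R}_A} \phi$ holds, applying the second bullet of that lemma with $\psi = \bot$, is sound (and you are right that the first bullet, consistency, is not needed). The paper's written proof merely unfolds the same fact inline rather than citing Lemma \ref{ModalBehaviour} --- if $\nVdash_{\mathscr{B},\mathfrak{R}_A} \phi$ then any $\mathscr{C}\supseteq\mathscr{B}$ with $\Vdash_{\mathscr{C},\mathfrak{R}_A}\phi$ is a proper superset and hence inconsistent by maximal consistency, yielding $\Vdash_{\mathscr{B},\mathfrak{R}_A}\phi\to\bot$ directly from the validity clause for $\to$ --- so the two arguments coincide in substance.
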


\begin{proof}
    Consider the case in which $\nVdash_{\mathscr{B},\mathfrak{R}_A} \phi$. Any base $\mathscr{C}$ s.t. $\mathscr{C}\supseteq\mathscr{B}$ and $\Vdash_{\mathscr{C},\mathfrak{R}_A} \phi$ is inconsistent and so $\Vdash_{\mathscr{C},\mathfrak{R}_A} \bot$ and so $\Vdash_{\mathscr{B},\mathfrak{R}_A} \phi\to\bot$.

\end{proof}

Before moving to our completeness and soundness proofs, we show some more useful properties of maximally-consistent bases. If a formula is not valid at a particular base, then there is a maximally-consistent superset of that base at which it does not hold either. Or conversely, if something holds at all maximally-consistent supersets of a base, it also has to hold at that base. 

\begin{lemma}
    \label{lem:nonvalidMaxcon}
    For every formula $\phi$, if there is a base $\mathscr{B}$ s.t. $\nVdash_{\mathscr{B},\mathfrak{R}_A} \phi$, then there is a maximally-consistent base $\mathscr{C}\supseteq \mathscr{B}$ with $\nVdash_{\mathscr{C},\mathfrak{R}_A} \phi$.
\end{lemma}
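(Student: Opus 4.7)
The plan is to argue by induction on the complexity of $\phi$, generalizing Makinson's classical Lemma~\ref{MaxCon} to the modal setting. For atomic $\phi$ and $\phi = \bot$, I would reproduce Makinson's Lindenbaum-style enumeration of base rules: starting from $\mathscr{B}_0 = \mathscr{B}$, adding each successive rule whenever doing so preserves both consistency and the failure of $\phi$; the atomic case uses the fact that the closure $\overline{\cdot}$ depends only on the base. For $\phi = \psi\to\chi$, the failure at $\mathscr{B}$ yields a base $\mathscr{B}'\supseteq\mathscr{B}$ with $\Vdash_{\mathscr{B}',\mathfrak{R}_A} \psi$ and $\nVdash_{\mathscr{B}',\mathfrak{R}_A} \chi$; apply the inductive hypothesis to $\chi$ at $\mathscr{B}'$ to obtain a maximally-consistent $\mathscr{C}\supseteq\mathscr{B}'\supseteq \mathscr{B}$ with $\nVdash_{\mathscr{C},\mathfrak{R}_A}\chi$, and use Lemma~\ref{ModalMonotonicity} to lift $\Vdash_{\mathscr{B}'} \psi$ up to $\mathscr{C}$, so that $\nVdash_{\mathscr{C},\mathfrak{R}_A} \psi\to\chi$.

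The genuinely modal case is $\phi = K_a\psi$. From $\nVdash_{\mathscr{B},\mathfrak{R}_A} K_a\psi$, pick a witness $\mathscr{D}$ with $\mathfrak{R}_a\mathscr{B}\mathscr{D}$ and $\nVdash_{\mathscr{D},\mathfrak{R}_A}\psi$; by Lemma~\ref{EFQ}, $\mathscr{D}$ is consistent, and by the inductive hypothesis on $\psi$ there is a maximally-consistent $\mathscr{D}^*\supseteq\mathscr{D}$ with $\nVdash_{\mathscr{D}^*,\mathfrak{R}_A}\psi$. The strategy now exploits the symmetry of $\mathfrak{R}_a$, which is a consequence of reflexivity together with the Euclidean condition in Definition~\ref{modalRelation}: from $\mathfrak{R}_a\mathscr{B}\mathscr{D}$ one derives $\mathfrak{R}_a\mathscr{D}\mathscr{B}$, then applies condition~(c) with the consistent extension $\mathscr{D}^*\supseteq \mathscr{D}$ to produce some $\mathscr{E}\supseteq\mathscr{B}$ with $\mathfrak{R}_a\mathscr{D}^*\mathscr{E}$, so $\mathfrak{R}_a\mathscr{E}\mathscr{D}^*$ by symmetry. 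Extend $\mathscr{E}$ to a maximally-consistent base $\mathscr{C}\supseteq\mathscr{E}$ via Makinson's construction --- this is legitimate because $\mathscr{E}$ is consistent by condition~(b) --- and note $\mathscr{C}\supseteq\mathscr{B}$.

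To close the argument it suffices to verify $\mathfrak{R}_a\mathscr{C}\mathscr{D}^*$. From $\mathfrak{R}_a\mathscr{E}\mathscr{D}^*$ and the consistent extension $\mathscr{C}\supseteq\mathscr{E}$, condition~(c) yields some $\mathscr{F}\supseteq\mathscr{D}^*$ with $\mathfrak{R}_a\mathscr{C}\mathscr{F}$; condition~(b) makes $\mathscr{F}$ consistent, and then the maximal-consistency of $\mathscr{D}^*$ forces $\mathscr{F}=\mathscr{D}^*$. Thus $\mathfrak{R}_a\mathscr{C}\mathscr{D}^*$ together with $\nVdash_{\mathscr{D}^*,\mathfrak{R}_A}\psi$ gives $\nVdash_{\mathscr{C},\mathfrak{R}_A} K_a\psi$, completing the induction.

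The main obstacle I anticipate is precisely the interplay at the modal step: naively applying condition~(c) to transfer the witness $\mathscr{D}$ across extensions of $\mathscr{B}$ produces some $\mathscr{E}\supseteq\mathscr{D}$ which need not falsify $\psi$, since monotonicity (Lemma~\ref{ModalMonotonicity}) only runs the wrong way. The resolution is to first saturate $\mathscr{D}$ to a maximally-consistent $\mathscr{D}^*$, at which point any consistent superset of $\mathscr{D}^*$ collapses back to $\mathscr{D}^*$ itself; this makes $\mathscr{D}^*$ a stable witness that is preserved by~(c). The argument uses symmetry of $\mathfrak{R}_a$ in an essential way, which is consistent with the paper's stated observation that this method applies \emph{only} to $S5$ and not to $K$, $KT$, $K4$, or $S4$.
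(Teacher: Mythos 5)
Your proposal is correct and follows essentially the same route as the paper's own proof: induction on $\phi$ with the classical cases handled as in Makinson, and the $K_a\psi$ case resolved by first saturating the falsifying witness to a maximally-consistent base via the induction hypothesis, then using symmetry of $\mathfrak{R}_a$ together with condition (c) of Definition~\ref{modalRelation} (applied twice, the second time exploiting maximal consistency to force the extension to collapse) to carry the witness up to a maximally-consistent extension of $\mathscr{B}$. The only differences are cosmetic (naming of bases) plus your explicit notes that the witness is consistent by Lemma~\ref{EFQ} and that symmetry follows from reflexivity and the Euclidean condition, both of which the paper leaves implicit.
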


\begin{proof}
    The proof goes by induction on the complexity of the formula $\phi$. The proof for the classical cases follows the proof in \cite{Makinson2014}.

    We consider the case in which $\phi=K_a\psi$. Since $\nVdash_{\mathscr{B},\mathfrak{R}_A} K_a\psi$, there is a $\mathscr{C}$ s.t. $\mathfrak{R}_a\mathscr{B}\mathscr{C}$ and $\nVdash_{\mathscr{C},\mathfrak{R}_A} \psi$. By our induction hypothesis, we know that there is a maximally-consistent $\mathscr{C}^*\supseteq\mathscr{C}$ with $\nVdash_{\mathscr{C},\mathfrak{R}_A} \psi$. By symmetry, $\mathfrak{R}_a\mathscr{C}\mathscr{B}$ and so, by condition (c) of Defintition \ref{modalRelation}, there is a $\mathscr{B}'\supseteq\mathscr{B}$ s.t. $\mathfrak{R}_a\mathscr{C}^*\mathscr{B}'$. Again by symmetry, we have $\mathfrak{R}_a\mathscr{B}'\mathscr{C}^*$. If $\mathscr{B}'$ is maximally-consistent, let $\mathscr{B}'$ be our $\mathscr{B}^*$. Otherwise note that for any maximally-consistent $\mathscr{B}^*\supseteq\mathscr{B}'$ we have $\mathfrak{R}_a\mathscr{B}^*\mathscr{C}^*$ because of condition (c) again and because $\mathscr{C}^*$ is maximally-consistent. Finally, since $\nVdash_{\mathscr{C}^*,\mathfrak{R}_A} \psi$, we have $\nVdash_{\mathscr{B}^*,\mathfrak{R}_A} K_a\psi$.\footnote{It might be of interest to note here that this proof relies on $\mathfrak{R}_a$ being symmetric. This suggests that it cannot be used to show that this property holds for arbitrary modal relations. More conditions on modal relations might be required for this lemma to hold for non-symmetric modal relations.}
\end{proof}

The following interesting corollary follows immediately from the special case in which $\mathscr{B}$ is the empty base.

\begin{cor}
    Any formula $\phi$ that is valid at all maximally-consistent bases for all $\mathfrak{R}_A$, is valid.
\end{cor}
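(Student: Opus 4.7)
The plan is to prove the contrapositive, using Lemma \ref{lem:nonvalidMaxcon} applied to the empty base together with monotonicity (Lemma \ref{ModalMonotonicity}). Validity means $\Vdash_{\mathscr{B},\mathfrak{R}_A} \phi$ for every base $\mathscr{B}$ and every set of $S5$-modal relations $\mathfrak{R}_A$, so a failure of validity gives some $\mathscr{B}$ and $\mathfrak{R}_A$ at which $\phi$ fails. Rather than argue directly about this arbitrary $\mathscr{B}$, the cleanest route is to reduce the whole problem to the empty base, since $\emptyset \subseteq \mathscr{B}$ for every base $\mathscr{B}$.

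Concretely, I would argue as follows. Fix an arbitrary set of $S5$-modal relations $\mathfrak{R}_A$, and suppose for contradiction that $\nVdash_{\emptyset,\mathfrak{R}_A} \phi$. By Lemma \ref{lem:nonvalidMaxcon}, applied to the empty base $\emptyset$, there is a maximally-consistent base $\mathscr{C} \supseteq \emptyset$ with $\nVdash_{\mathscr{C},\mathfrak{R}_A} \phi$. This contradicts the hypothesis that $\phi$ holds at every maximally-consistent base under every choice of $\mathfrak{R}_A$. Hence $\Vdash_{\emptyset,\mathfrak{R}_A} \phi$. Since every base $\mathscr{B}$ is a superset of $\emptyset$, Lemma \ref{ModalMonotonicity} then gives $\Vdash_{\mathscr{B},\mathfrak{R}_A} \phi$. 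As $\mathscr{B}$ and $\mathfrak{R}_A$ were arbitrary, $\phi$ is valid.

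There is really no main obstacle here: the content of the corollary is essentially contained in Lemma \ref{lem:nonvalidMaxcon}, and the only thing to check is the bookkeeping on quantifiers over $\mathfrak{R}_A$ (the hypothesis of the corollary and the conclusion of Lemma \ref{lem:nonvalidMaxcon} use the \emph{same} fixed $\mathfrak{R}_A$, which is why the argument goes through cleanly). The appeal to monotonicity is what makes the reduction to the empty base legitimate and explains the parenthetical remark in the excerpt that the corollary follows from the special case $\mathscr{B} = \emptyset$ of the lemma.
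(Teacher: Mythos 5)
Your proof is correct and matches the paper's intended argument: the paper simply remarks that the corollary ``follows immediately from the special case in which $\mathscr{B}$ is the empty base'' of Lemma \ref{lem:nonvalidMaxcon}, and you have filled in exactly that reasoning, including the appeal to Lemma \ref{ModalMonotonicity} to propagate validity from $\emptyset$ to all bases. The only remark worth making is that the detour through the empty base (and hence the use of monotonicity) is avoidable --- applying Lemma \ref{lem:nonvalidMaxcon} directly to whatever base witnesses the failure of validity gives the contrapositive in one step --- but your version is equally valid and is the one the paper gestures at.
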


\iffalse{
As mentioned above, our strategy for proving soundness is based on the idea that we can show an equivalence between maximally-consistent bases and worlds in Kripke semantics. A problem with this approach, however, is that a model might have multiple worlds with the same valuations while there can only be one corresponding maximally-consistent base for those worlds. Lemma \ref{lem:ModalAlmostMaxCon} shows that the bases right below the maximally-consistent bases agree with their maximally-consistent superset bases on the validity of all formulae. This is because they already have enough rules to determine the validity for any formula and adding new rules only makes a difference, if it causes them to become inconsistent.

\begin{lemma}
    \label{lem:ModalAlmostMaxCon}

    For all $\phi$, $\mathfrak{R}_A$, and bases $\mathscr{B}$ and $\mathscr{B}'$ s.t. $\mathscr{B}$ is the only maximally-consistent superset base of $\mathscr{B}'$,

    $\Vdash_{\mathscr{B},\mathfrak{R}_A} \phi$ iff $\Vdash_{\mathscr{B}',\mathfrak{R}_A} \phi$
\end{lemma}

\begin{proof}
    The left-to-right direction follows immediately from Lemma \ref{ModalMonotonicity}. For the right-to-left direction, assume $\nVdash_{\mathscr{B},\mathfrak{R}_A} \phi$ and, since $\mathscr{B}'$ is the only maximally-consistent superset base of $\mathscr{B}$, $\nVdash_{\mathscr{B}',\mathfrak{R}_A} \phi$ follows from Lemma \ref{lem:nonvalidMaxcon}.
\end{proof}
}\else

\section{Soundness and completeness}
\label{sec:completeness}
We show completeness between our semantics and the axiomatic system for $S5$-modal logic step-by-step. We first show that the axioms and rules of classical logic are valid in our semantics and then add the modal axioms and necessitation. This method of proving completeness differs from the methods generally used in the literature (see, for example, \cite{Sandqvist2005,Sandqvist2015}) in which completeness is shown between a natural deduction proof system and the base-extension semantics.\footnote{Recall that the result we call completeness is called soundness in the cited papers, because of our use of Makinson's \cite{Makinson2014} choice of soundness and completeness.} 

We believe it is fitting to focus on the axiomatic system in the case of modal logics because of the close connection between a modal logic and its corresponding modal axiom(s).\footnote{Note that, in this paper, we are concerned --- as in, for example, \cite{Sandqvist2015} --- with the use of base-extension semantics to define the validity of formulae not the validity of proofs. The latter occurs in a different aspect of proof-theoretic semantics in which the meanings of the connectives are determined by various analyses of Gentzen-style systems. See \cite{Schroeder2007modelvsproof} for a helpful introductory discussion of the latter.} 
We follow the same approach as in the appendix of \cite{Eckhardt2024}, in which we have shown the completeness of our base-extension semantics for single-agent modal logics up to $S4$ with respect to their corresponding axiomatic proof systems.

We start by showing that \emph{(MP)} and the classical axioms hold.

\begin{lemma} \label{lem:classicalaxioms}
    The following (classical) axioms hold:

    \begin{enumerate}[label=(\arabic{enumi})]
         
        \item $\phi\Vdash \psi\to\phi$ 
        \item $\phi\to(\psi\to\chi)\Vdash (\phi\to\psi)\to(\phi\to\chi)$
        \item $(\phi\to\bot)\to(\psi\to\bot)\Vdash \psi\to\phi$.
    \end{enumerate}

    Additionally, the following rule holds:

        \begin{enumerate}[label=(MP)]
            \item From $\Vdash \phi\to \psi$ and $\Vdash \phi$, infer $\Vdash \psi$.       
        
        \end{enumerate}

\end{lemma}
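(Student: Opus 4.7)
The plan is to treat the four items separately, handling the three easy cases first (the rule \emph{(MP)} and the two positive axioms) and then tackling the contraposition axiom \emph{(3)}, which is where the classical character of the logic really shows.

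For \emph{(MP)}, I would simply unfold $\Vdash \phi\to\psi$ at an arbitrary base $\mathscr{B}$ and relation-set $\mathfrak{R}_A$: by Definition~\ref{EXTValidity}, $\phi\Vdash_{\mathscr{B},\mathfrak{R}_A}\psi$ means that whenever $\Vdash_{\mathscr{C},\mathfrak{R}_A}\phi$ for some $\mathscr{C}\supseteq\mathscr{B}$, we have $\Vdash_{\mathscr{C},\mathfrak{R}_A}\psi$; since $\Vdash\phi$ gives in particular $\Vdash_{\mathscr{B},\mathfrak{R}_A}\phi$ (taking $\mathscr{C}=\mathscr{B}$), we conclude $\Vdash_{\mathscr{B},\mathfrak{R}_A}\psi$ at every $\mathscr{B},\mathfrak{R}_A$, i.e.\ $\Vdash\psi$. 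Axiom \emph{(1)} unfolds to: for all $\mathscr{C}\supseteq\mathscr{B}$ with $\Vdash_{\mathscr{C},\mathfrak{R}_A}\phi$, and then for all $\mathscr{D}\supseteq\mathscr{C}$ with $\Vdash_{\mathscr{D},\mathfrak{R}_A}\psi$, we must show $\Vdash_{\mathscr{D},\mathfrak{R}_A}\phi$, which is immediate from Lemma~\ref{ModalMonotonicity} applied to $\mathscr{C}\subseteq\mathscr{D}$. Axiom \emph{(2)} proceeds similarly: after unfolding to three nested supersets $\mathscr{E}\supseteq\mathscr{D}\supseteq\mathscr{C}\supseteq\mathscr{B}$ at which $\Vdash_{\mathscr{E},\mathfrak{R}_A}\phi$ and the antecedents hold, I use monotonicity to transport $\phi\Vdash_{\mathscr{C},\mathfrak{R}_A}\psi\to\chi$ and $\phi\Vdash_{\mathscr{D},\mathfrak{R}_A}\psi$ up to $\mathscr{E}$, apply them to the assumption $\Vdash_{\mathscr{E},\mathfrak{R}_A}\phi$ to conclude first $\Vdash_{\mathscr{E},\mathfrak{R}_A}\psi\to\chi$ and $\Vdash_{\mathscr{E},\mathfrak{R}_A}\psi$, and then combine to obtain $\Vdash_{\mathscr{E},\mathfrak{R}_A}\chi$.

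The contraposition axiom \emph{(3)} is the main obstacle, since purely constructive reasoning from the implication clause does not give it; this is where the classical flavour enters via maximally-consistent bases. My plan is to argue by contradiction: fix $\mathscr{B},\mathfrak{R}_A$ and $\mathscr{C}\supseteq\mathscr{B}$ with $\Vdash_{\mathscr{C},\mathfrak{R}_A}(\phi\to\bot)\to(\psi\to\bot)$, and suppose some $\mathscr{D}\supseteq\mathscr{C}$ satisfies $\Vdash_{\mathscr{D},\mathfrak{R}_A}\psi$ but $\nVdash_{\mathscr{D},\mathfrak{R}_A}\phi$. Using Lemma~\ref{lem:nonvalidMaxcon}, I extend $\mathscr{D}$ to a maximally-consistent $\mathscr{D}^*\supseteq\mathscr{D}$ still refuting $\phi$; by Lemma~\ref{ModalMonotonicity}, $\Vdash_{\mathscr{D}^*,\mathfrak{R}_A}\psi$ and $\Vdash_{\mathscr{D}^*,\mathfrak{R}_A}(\phi\to\bot)\to(\psi\to\bot)$ persist. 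Lemma~\ref{lem:excludedmiddle} then yields $\Vdash_{\mathscr{D}^*,\mathfrak{R}_A}\phi\to\bot$, whence the transported implication gives $\Vdash_{\mathscr{D}^*,\mathfrak{R}_A}\psi\to\bot$; applying this to $\Vdash_{\mathscr{D}^*,\mathfrak{R}_A}\psi$ produces $\Vdash_{\mathscr{D}^*,\mathfrak{R}_A}\bot$, contradicting maximal consistency via Lemma~\ref{ModalBehaviour}.

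The modal conditions on $\mathfrak{R}_A$ play no role anywhere in this lemma; everything reduces to the implication clause plus monotonicity, and the classical step for \emph{(3)} is handled entirely by the maximally-consistent-base machinery already established. Accordingly, I expect the write-up to be short, with the bulk of the text devoted to the reductio for axiom~\emph{(3)}.
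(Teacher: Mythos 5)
Your proposal is correct and follows essentially the same route as the paper's proof: \emph{(MP)} by instantiating the implication clause at $\mathscr{C}=\mathscr{B}$, axioms \emph{(1)} and \emph{(2)} by unfolding nested supersets and applying Lemma~\ref{ModalMonotonicity} together with the pointwise form of modus ponens, and axiom \emph{(3)} via maximally-consistent extensions, Lemma~\ref{lem:excludedmiddle}, and Lemma~\ref{lem:nonvalidMaxcon}. The only cosmetic difference is that for \emph{(3)} you run a reductio at a single maximally-consistent superset refuting $\phi$, whereas the paper argues that $\phi$ holds at \emph{all} maximally-consistent supersets and then invokes the contrapositive of Lemma~\ref{lem:nonvalidMaxcon} --- the two formulations are interchangeable.
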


\begin{proof}
    The detailed proof can be found in the appendix. We give a sketch here.

    The proof of \emph{(MP)} uses the validity condition of $\to$ from Definition \ref{EXTValidity} to proof that for all $\mathscr{B}$ and $\mathfrak{R}_A$
    
    \begin{center}
    If $\Vdash_{\mathscr{B},\mathfrak{R}_A} \phi\to \psi$ and $\Vdash_{\mathscr{B},\mathfrak{R}_A} \phi$, then $\Vdash_{\mathscr{B},\mathfrak{R}_A} \psi$.
    \end{center}

    \noindent \emph{(MP)} follows immediately from this result.
    
    The proofs of \emph{(1)} and \emph{(2)} follow the same strategy: For an arbitrary base $\mathscr{B}$ and set of relations $\mathfrak{R}_A$, we assume that the formula on the left of the turnstyle holds and show that the formula on the right follows. As we are trying to show conditional formulae, we additionally assume the antecedent and show that the consequent has to hold at all supersets of $\mathscr{B}$ using Lemma \ref{ModalMonotonicity}.

    For the proof of \emph{(3)}, we show that at any maximally-consistent superset base of $\mathscr{B}$, called $\mathscr{C}$,  at which either $\phi$ or $\phi\to\bot$ holds, $\psi\to\phi$ follows. Given Lemma \ref{lem:excludedmiddle}, we know that $\phi$ or $\phi\to\bot$ has to hold for all maximally-consistent bases and so $\psi\to\phi$ holds at all $\mathscr{C}$. From the contrapositive of Lemma \ref{lem:nonvalidMaxcon}, it follows that $\psi\to\phi$ holds at $\mathscr{B}$ as well.
\end{proof}

Lemma \ref{lem:classicalaxioms} tells us that all classical tautologies will also be valid in our base-extension semantics. This gives us completeness of our system to classical logic. What is left to show is that the same holds for the corresponding $S5$ modal formulae. We prove this analogously by showing that the axioms and rules of our modal logic hold at our bases.

\begin{lemma} \label{lem:modalaxioms}
    The following (modal) axioms hold:

    \begin{enumerate}[label=]
         \item (K) $K_a(\phi\to\psi)\Vdash K_a\phi\to K_a\psi$
         \item (T)  $K_a\phi\Vdash \phi$
         \item (4) $K_a \phi\Vdash K_a K_a\phi$.  
         \item (5) $\neg K_a \phi\Vdash K_a\neg K_a\phi$

    \end{enumerate}

    Additionally, the following rule holds:

        \begin{enumerate}[label=]
            \item (NEC) From $\Vdash \phi$, infer $\Vdash K_a\phi$       
        
        \end{enumerate}

\end{lemma}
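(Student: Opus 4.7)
The plan is to tackle each axiom separately by taking an arbitrary base $\mathscr{B}$ and set of $S5$-modal relations $\mathfrak{R}_A$, unwinding the validity conditions of Definition \ref{EXTValidity}, and then invoking the appropriate frame condition (reflexivity, transitivity, Euclideanness) together with conditions (a)--(d) of Definition \ref{modalRelation} and Lemma \ref{ModalMonotonicity}. Throughout, I will exploit the fact that \emph{(MP)} is already established at the base level (from Lemma \ref{lem:classicalaxioms}).

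For \emph{(K)}, I fix $\mathscr{C}\supseteq\mathscr{B}$ with $\Vdash_{\mathscr{C},\mathfrak{R}_A} K_a\phi$ and, using Lemma \ref{ModalMonotonicity} on the assumption $K_a(\phi\to\psi)$, show both $\phi\to\psi$ and $\phi$ hold at every $\mathscr{D}$ with $\mathfrak{R}_a\mathscr{C}\mathscr{D}$, and conclude $\psi$ holds there by \emph{(MP)}, giving $K_a\psi$ at $\mathscr{C}$. For \emph{(T)}, reflexivity of $\mathfrak{R}_a$ immediately yields $\phi$ at any $\mathscr{C}\supseteq\mathscr{B}$ satisfying $K_a\phi$. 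For \emph{(4)}, given $\mathfrak{R}_a\mathscr{C}\mathscr{D}$ and $\mathfrak{R}_a\mathscr{D}\mathscr{E}$, transitivity gives $\mathfrak{R}_a\mathscr{C}\mathscr{E}$, so the assumption $K_a\phi$ at $\mathscr{C}$ yields $\phi$ at $\mathscr{E}$, hence $K_aK_a\phi$ at $\mathscr{C}$. Finally, for \emph{(NEC)}, if $\phi$ is valid at every base under every $\mathfrak{R}_A$, then in particular it holds at every $\mathscr{C}$ with $\mathfrak{R}_a\mathscr{B}\mathscr{C}$, so $K_a\phi$ is valid.

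The main obstacle is \emph{(5)}, where the interplay between the superset structure used in the validity condition for $\to$ and the modal relation $\mathfrak{R}_a$ is delicate. My plan is as follows. Assume $\Vdash_{\mathscr{B},\mathfrak{R}_A} \neg K_a\phi$, take $\mathscr{C}$ with $\mathfrak{R}_a\mathscr{B}\mathscr{C}$, and a $\mathscr{D}\supseteq\mathscr{C}$ with $\Vdash_{\mathscr{D},\mathfrak{R}_A} K_a\phi$; the goal is $\Vdash_{\mathscr{D},\mathfrak{R}_A} \bot$. If $\mathscr{D}$ is already inconsistent, we are done. Otherwise, by symmetry (from Euclideanness plus reflexivity) we have $\mathfrak{R}_a\mathscr{C}\mathscr{B}$, and condition (c) of Definition \ref{modalRelation} applied to $\mathscr{D}\supseteq\mathscr{C}$ delivers an $\mathscr{E}\supseteq\mathscr{B}$ with $\mathfrak{R}_a\mathscr{D}\mathscr{E}$. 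I then show $\Vdash_{\mathscr{E},\mathfrak{R}_A} K_a\phi$: for any $\mathscr{F}$ with $\mathfrak{R}_a\mathscr{E}\mathscr{F}$, transitivity gives $\mathfrak{R}_a\mathscr{D}\mathscr{F}$, whence $\phi$ holds at $\mathscr{F}$. But by Lemma \ref{ModalMonotonicity} applied to $\neg K_a\phi$ at $\mathscr{B}$ we also have $\Vdash_{\mathscr{E},\mathfrak{R}_A} K_a\phi\to\bot$, so \emph{(MP)} yields $\Vdash_{\mathscr{E},\mathfrak{R}_A} \bot$; that is, $\mathscr{E}$ is inconsistent.

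The final step is to transfer the inconsistency of $\mathscr{E}$ back to $\mathscr{D}$: since $\mathfrak{R}_a\mathscr{D}\mathscr{E}$ and $\mathscr{E}$ is inconsistent, the contrapositive of condition (b) in Definition \ref{modalRelation} gives that $\mathscr{D}$ is inconsistent too, so $\Vdash_{\mathscr{D},\mathfrak{R}_A} \bot$ as required. This closes the $K_a\phi\to\bot$ condition at $\mathscr{C}$ and hence $K_a\neg K_a\phi$ at $\mathscr{B}$. This step crucially needs both symmetry (to invoke (c) in the right direction) and transitivity, which is exactly the structural role of the full Euclidean assumption, and mirrors the use of symmetry already flagged in Lemma \ref{lem:nonvalidMaxcon}. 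The detailed routine verifications can be deferred to the appendix in the same style as Lemma \ref{lem:classicalaxioms}.
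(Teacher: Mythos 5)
Your proposal is correct and follows essentially the same route as the paper's proof: reflexivity for \emph{(T)}, transitivity for \emph{(4)}, and, for the crucial case \emph{(5)}, symmetry (derived from reflexivity plus Euclideanness) together with condition (c) of Definition \ref{modalRelation} to obtain an $\mathscr{E}\supseteq\mathscr{B}$ with $\mathfrak{R}_a\mathscr{D}\mathscr{E}$, from which $K_a\phi$ at $\mathscr{D}$ clashes with $\neg K_a\phi$ at $\mathscr{B}$. The only differences are cosmetic: you argue \emph{(K)} and \emph{(4)} directly rather than by contraposition, and in \emph{(5)} you realize the contradiction via transitivity plus the contrapositive of condition (b), where the paper instead uses symmetry and the Euclidean property to exhibit a $\phi$-refuting successor of $\mathscr{D}$.
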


\begin{proof} 
    The detailed proof can be found in the appendix. We give a sketch here.
    
    The proof of these goes via contradiction with the exception of \emph{(T)}, which can be straightforwardly proven to follow directly from the reflexivity of $\mathfrak{R}_a$.

    For the other cases, we take some base $\mathscr{B}$ and set of relations $\mathfrak{R}_A$ and assume the formula to the left of the turnstyle. We then show that the right's not holding leads to a contradiction given that $\mathfrak{R}_a$ is a $S5$-modal relation. The arguments used closely mirror equivalent arguments in Kripke semantics. 
\end{proof}

From the Lemmas \ref{lem:classicalaxioms} and \ref{lem:modalaxioms}, our completeness result follows immediately.

\begin{theorem}[Completeness]  If $\phi$ is a theorem of $S5$ modal logic, then $\phi$ is valid in the base-extension semantics for $S5$.
\end{theorem}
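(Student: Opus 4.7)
The plan is to proceed by induction on the length of a derivation of $\phi$ in the Hilbert-style axiomatic proof system for $S5$ given in Definition \ref{HilbertSystem}. Since by that definition a theorem $\phi$ of $S5$ is, by definition, obtained by a finite sequence of applications of the axioms \emph{(1)--(3)}, \emph{(K)}, \emph{(T)}, \emph{(4)}, \emph{(5)}, together with the rules \emph{(MP)} and \emph{(NEC)}, the theorem will follow once we know that (i) every axiom instance is valid in the base-extension semantics, and (ii) both rules preserve validity.

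For the base case of the induction, suppose $\phi$ is an axiom instance. If $\phi$ is an instance of one of the classical axioms \emph{(1)--(3)}, Lemma \ref{lem:classicalaxioms} shows directly that $\Vdash \phi$, since the cited entailments $\phi\Vdash\psi\to\phi$ etc.\ immediately give $\Vdash (\phi\to(\psi\to\phi))$ by unfolding the validity clause for $\to$ in Definition \ref{EXTValidity} at an arbitrary base and set of $S5$-modal relations. If $\phi$ is an instance of one of the modal axioms \emph{(K)}, \emph{(T)}, \emph{(4)}, \emph{(5)}, Lemma \ref{lem:modalaxioms} gives the corresponding entailment, from which $\Vdash\phi$ again follows by the validity condition for $\to$.

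For the inductive step, suppose the last rule applied is \emph{(MP)} or \emph{(NEC)}, and assume inductively that all premises of this rule are valid. If the rule is \emph{(MP)}, then we have $\Vdash\phi\to\psi$ and $\Vdash\phi$; Lemma \ref{lem:classicalaxioms} states that \emph{(MP)} preserves validity, so $\Vdash\psi$. If the rule is \emph{(NEC)}, then we have $\Vdash\phi$; Lemma \ref{lem:modalaxioms} states that \emph{(NEC)} preserves validity, so $\Vdash K_a\phi$.

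There is no genuinely difficult step here: the whole technical content of the argument has already been carried out in Lemmas \ref{lem:classicalaxioms} and \ref{lem:modalaxioms}, and the induction is the same routine structural induction on Hilbert derivations that appears in the classical completeness argument. The only thing that needs a moment's care is to check that the entailments $\Gamma\Vdash\phi$ stated in those two lemmas translate, via the validity clause for $\to$, into plain validity $\Vdash\phi\to\cdots$ of the corresponding axiom schemata uniformly across all bases $\mathscr{B}$ and all sets of $S5$-modal relations $\mathfrak{R}_A$, but this is immediate from Definition \ref{EXTValidity}.
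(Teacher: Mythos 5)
Your proposal is correct and follows essentially the same route as the paper: the paper states that completeness ``follows immediately'' from Lemmas \ref{lem:classicalaxioms} and \ref{lem:modalaxioms}, which is precisely the induction on Hilbert derivations that you spell out, including the observation that the entailments $\Gamma\Vdash\phi$ in those lemmas convert to validity of the axiom schemata via the clause for $\to$ in Definition \ref{EXTValidity}. You have merely made explicit the routine structural induction that the paper leaves implicit.
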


We show soundness with respect to the Kripke semantics for epistemic logic rather than the axiomatic system as we did for completeness. We take a formula that is not valid in Kripke semantics and show that is also not valid on the corresponding base-extension semantics. This suffices as the Kripke semantics is sound and complete with respect to the axiomatic system.


This proof-strategy is the same as the one used to show soundness in \cite{Eckhardt2024}. 

\begin{theorem}[Soundness] If $\phi$ is valid in the base-extension semantics for multi-agent modal logic $S5$, then $\phi$ is a theorem of multi-agent modal logic $S5$.
\end{theorem}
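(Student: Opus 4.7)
The plan is to prove the contrapositive: assume $\phi$ is not a theorem of multi-agent $S5$ and construct a base $\mathscr{B}$ with a set $\mathfrak{R}_A$ of $S5$-modal relations at which $\phi$ fails. By the completeness half of Theorem \ref{KripkeComp}, non-theoremhood yields an $S5$-model $M = \langle \langle W, R_A \rangle, V \rangle$ and a world $w_0 \in W$ with $M, w_0 \not\vDash \phi$. The task is therefore to manufacture a base-extension countermodel from this Kripke countermodel, following the strategy already used in the non-Euclidean case in \cite{Eckhardt2024}.

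For each world $w \in W$, I would define $\mathscr{B}_w$ as the base whose atomic axioms are $\Rightarrow p$ for exactly those $p$ with $w \in V(p)$, and then extend $\mathscr{B}_w$ to a maximally-consistent base $\mathscr{B}_w^*$ that still makes provable only those atoms. The family $\mathfrak{R}_A$ would be defined on $\Omega$ by first stipulating, on the distinguished collection $\{\mathscr{B}_w^* : w \in W\}$, that $\mathfrak{R}_a \mathscr{B}_w^* \mathscr{B}_v^*$ iff $R_a w v$, and then extending to arbitrary consistent bases by declaring $\mathfrak{R}_a \mathscr{C} \mathscr{D}$ whenever, for every maximally-consistent $\mathscr{B}_w^* \supseteq \mathscr{C}$, there exists a maximally-consistent $\mathscr{B}_v^* \supseteq \mathscr{D}$ with $R_a w v$. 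Inconsistent bases would be collected into a single cluster in which $\mathfrak{R}_a$ is universal, to satisfy conditions (a) and (b) of Definition \ref{modalRelation}.

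The heart of the argument is the truth lemma: for every formula $\psi$ and every $w \in W$,
\[
M, w \vDash \psi \quad\text{iff}\quad \Vdash_{\mathscr{B}_w^*, \mathfrak{R}_A} \psi.
\]
The induction on $\psi$ has atomic case by construction. The cases $\bot$ and $\to$ follow essentially as in \cite{Makinson2014}, using Lemma \ref{ModalBehaviour} to identify the semantic clauses at maximally-consistent bases with the Kripke clauses at worlds. For $K_a \psi$, the forward direction uses the correspondence $\mathfrak{R}_a \mathscr{B}_w^* \mathscr{B}_v^* \Leftrightarrow R_a w v$ together with the induction hypothesis. For the backward (contrapositive) direction, a counterexample $\mathscr{C}$ with $\mathfrak{R}_a \mathscr{B}_w^* \mathscr{C}$ and $\nVdash_{\mathscr{C}, \mathfrak{R}_A} \psi$ is promoted by Lemma \ref{lem:nonvalidMaxcon} to a maximally-consistent witness $\mathscr{B}_v^* \supseteq \mathscr{C}$ with $\nVdash_{\mathscr{B}_v^*, \mathfrak{R}_A} \psi$, which by construction satisfies $R_a w v$, so the induction hypothesis at $v$ gives $M, v \not\vDash \psi$. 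Applying the truth lemma at $w_0$ with $\psi = \phi$ yields $\nVdash_{\mathscr{B}_{w_0}^*, \mathfrak{R}_A} \phi$, completing the proof.

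The main obstacle will be verifying that the relations $\mathfrak{R}_a$ as constructed are genuinely $S5$-modal relations --- that is, that they satisfy all of conditions (a)--(d) of Definition \ref{modalRelation} as well as reflexivity, transitivity, and Euclideanness --- without breaking the correspondence with $R_a$ on the distinguished bases. The zig and zag clauses (c) and (d) must be checked for every sub- and super-set pair of bases, not merely for the collection $\{\mathscr{B}_w^*\}$, and the verification of (d) is expected to exploit the symmetry of $\mathfrak{R}_a$ in the same fashion as Lemma \ref{lem:nonvalidMaxcon}. Care is required to avoid the collapse to a universal relation that motivated replacing Definition \ref{OldmodalRelation} by Definition \ref{modalRelation} in the first place.
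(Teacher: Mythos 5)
Your overall strategy --- contrapose, pull an $S5$-countermodel $M,w_0\not\vDash\phi$ from Theorem \ref{KripkeComp}, build maximally-consistent bases for the worlds, mimic $R_a$ by $\mathfrak{R}_a$ on those bases, and prove a truth lemma via Lemma \ref{ModalBehaviour} --- is exactly the paper's strategy. But there is a concrete gap at the very first step of the construction. You define $\mathscr{B}_w$ from the valuation of $w$ alone, so if $M$ contains two distinct worlds $w\neq v$ with $V$-identical valuations (perfectly possible, e.g.\ in different $R_a$-equivalence classes), your distinguished bases for $w$ and $v$ either coincide --- making the stipulation ``$\mathfrak{R}_a\mathscr{B}_w^*\mathscr{B}_v^*$ iff $R_awv$'' ill-defined and collapsing worlds with different modal behaviour, which breaks the $K_a$-case of the truth lemma --- or are forced to differ on some atom, which breaks the atomic case. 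The paper spends its entire first step on precisely this obstacle: it first deforms $M$ into $M'$ by assigning to each world $v$ a fresh atom $q_v$ not occurring in $\phi$ with $V'(q_v)=W\setminus\{v\}$, so that no two worlds share a valuation, before building any bases. Your proposal omits this and so does not go through as stated.

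A second, related gap: you assert that $\mathscr{B}_w$ can be extended to a maximally-consistent base ``that still makes provable only those atoms.'' Lemma \ref{MaxCon} preserves the non-derivability of a \emph{single} formula under maximally-consistent extension; it does not by itself keep infinitely many atoms simultaneously underivable. The paper's gadget --- adding the rules $p\Rightarrow q_w$ and $q_w\Rightarrow p$ for every $p$ false at $w$, and then taking a maximally-consistent extension in which $q_w$ remains underivable --- is what ties all the false atoms to one sentinel atom so that a single application of Lemma \ref{MaxCon} suffices. Without something of this kind your atomic truth lemma is unsupported. (Your $K_a$ backward step also needs care: Lemma \ref{lem:nonvalidMaxcon} produces \emph{some} maximally-consistent superset of the witness $\mathscr{C}$ falsifying $\psi$, not necessarily one of the distinguished bases $\mathscr{B}_v^*$, so the correspondence with $R_a$ cannot be invoked for it directly.) The remaining work you defer --- verifying conditions (a)--(d) and the $S5$ frame properties for the extended relation --- is indeed where the paper invests most of its effort, via an explicit case analysis on how each pair enters $\mathfrak{R}_a$.
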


\begin{proof}
    The detailed proof can be found in the appendix. We give a sketch here.

Take some $\phi$ that is not valid in the Kripke semantics of multi-agent $S5$, we know that there is a model and a world s.t. $M,w\nvDash \phi$. We would like to use that model to create bases and a set of modal relations by taking the maximally-consistent bases that correspond to those worlds. However this would not work, because it is possible that $M$ contains multiple worlds with the same valuation, but we cannot have multiple maximally-consistent bases for which the same formulae hold. So, first we tweak $M$ to create a model $M'$ s.t. no two worlds in $M'$ have the same valuation but for which we still have $M',w\nvDash \phi$ by changing the valuations for propositional formulae that do not appear in $\phi$. We then use the methods from \cite{Makinson2014} to construct maximally-consistent bases from the worlds in $M'$ by choosing our base rules in a way that guarantees that all non-modal formulae are valid at the bases iff they are true at the worlds. Next we construct a set of relations $\mathfrak{R}_A$ on our set of bases that, for the bases corresponding to worlds, mimics the relations $R_a$ of $M'$. This concludes the construction of our counterexample.

What is left is to show that it is in fact the counterexample we need. To do so we show that the relations in $\mathfrak{R}_A$ are $S5$-modal relations (i.e., they are reflexive, transitive, and Euclidean, and that the conditions of Definition \ref{modalRelation} hold) and that, for the base $\mathscr{B}_w$ that corresponds to $w$ in $M'$, any formula holds at $\mathscr{B}_w$ iff they are true at $w$. Since $\mathscr{B}_w$ is a maximally-consistent base we can use Lemma \ref{ModalBehaviour}. From this and $M,w\nvDash \phi$, it follows that $\nVdash_{\mathscr{B}_w, \mathfrak{R}_A} \phi$ and so, we can conclude that $\phi$ is not valid in base-extension semantics.
\end{proof}

Here is a list of the steps of the proof (adapted from \cite{Eckhardt2024}). 

\begin{enumerate}
    \item Construct $M'$ from $M$ s.t. there are no two worlds with the same valuation.
    \item Construct maximally-consistent bases corresponding to the worlds in $M'$.
    \item Define a set of relations $\mathfrak{R}_A$ on the set of bases using $R_A$ of $M'$.
    \item Prove that the members of $\mathfrak{R}_A$ are $S5$-modal relations.
    \item Prove that $\Vdash_{\mathscr{B}_w, \mathfrak{R}_A} \psi$ iff $M',w\vDash \psi$, for all $\psi$. 
    \item Since $M,w\nvDash \phi$, it follows that $\nVdash_{\mathscr{B}_w, \mathfrak{R}_A} \phi$. So, $\phi$ is not valid in the base-extension semantics for epistemic logic.
\end{enumerate}

\section{Discussion} \label{sec:discussion}

We have developed a base-extension semantics for the multi-agent modal logic $S5$. In \cite{Eckhardt2024}, a base-extension semantics for modal logic has been given that is not complete with respect to Euclidean models. We have generalized the semantics to multiple agents and adapted the conditions for modal relations. A modal semantics with an unindexed $\square$ can be retrieved simply by considering the case in which we only have a single agent. We have established that the resulting semantics is sound and complete.


The semantics for multi-agent $S5$ modal logic provides the semantics for static epistemic logic from which, in future work, the authors plan to establish a semantics for PAL. In Kripke semantics, PAL is given in terms of an update semantics that takes an epistemic model and updates it corresponding to the information obtained from a certain announcement. In base-extension semantics, however, such an update has to be applied to the modal relations. 

For a simple example, take an $S5$-model with two worlds $w$ and $v$ and a single agent $a$ s.t. $R_a wv$. Let some formula $\phi$ hold at $w$ but not at $v$. An announcement of $\phi$ should result in a model in which $R_a wv$ is no longer the case as $a$ no longer considers non-$\phi$ worlds possible.  Similarly, in base-extension semantics an announcement of $\phi$ should change $\mathfrak{R}_a$ to no longer hold between $\phi$ and non-$\phi$ bases. This would, however, potentially cause $\mathfrak{R}_a$ to no longer be a modal relation, as non-$\phi$ bases can have superset bases on which $\phi$ holds and, conversely, non-$\phi$ be the case on subset bases of $\phi$ bases. 


We explore an approach that ensures that the resulting relations are modal relations by differentiating at which base an announcement has been taken: If an announcement of $\phi$ at a base $\mathscr{B}$ causes $\mathscr{B}$ to no longer have access to a non-$\phi$ base $\mathscr{C}$, then all sub- and super-sets of $\mathscr{B}$ do not have access to those sub- and super-sets of the $\mathscr{C}$ that they had access to before. The details of this approach remain future work.

This provides the first step into the broader world of dynamic epistemic logics.

\bibliographystyle{plain}
	\bibliography{DELpts}

\appendix 

\section{Proof Details Elided in Main Text}

\medskip

\noindent{\bf Lemma 5.1} \emph{The following axioms hold:}
    \begin{itemize} 
        \item[\emph{(1)}] $\phi\Vdash\psi\to\phi$ 
        \item[\emph{(2)}] $\phi\to(\psi\to\chi)\Vdash (\phi\to\psi)\to(\phi\to\chi)$
        \item[\emph{(3)}] $(\phi\to\bot)\to(\psi\to\bot)\Vdash \psi\to\phi$.
    \end{itemize}
    \emph{Additionally, the following rule holds:}
        \begin{enumerate}[label=$(MP)$]
            \item \emph{From $\Vdash \phi\to \psi$ and $\Vdash \phi$, infer $\Vdash\psi$.}       
        \end{enumerate}

\begin{proof}
    We start by showing \emph{(MP)} as it is helpful in establishing the other results. In fact, we show that for all $\mathscr{B}$ and $\mathfrak{R}_A$,

    \begin{center}
        \emph{(MP*)} if $\Vdash_{\mathscr{B},\mathfrak{R}_A} \phi\to \psi$ and $\Vdash_{\mathscr{B},\mathfrak{R}_A} \phi$, $\Vdash_{\mathscr{B},\mathfrak{R}_A} \psi$.
    \end{center}

    \noindent \emph{(MP)} follows immediately from \emph{(MP*)}. 
    
    Simply note that because $\Vdash_{\mathscr{B},\mathfrak{R}_A} \phi\to\psi$, we have for all $\mathscr{C}\supseteq\mathscr{B}$, if $\Vdash_{\mathscr{C},\mathfrak{R}_A} \phi$, then $\Vdash_{\mathscr{C},\mathfrak{R}_A} \psi$. By assumption, $\Vdash_{\mathscr{B},\mathfrak{R}_A} \phi$ and so $\Vdash_{\mathscr{B},\mathfrak{R}_A} \psi$.

    By Definition \ref{EXTValidity}, for \emph{(1)} we need to show that for any $\mathscr{B}$, if $\Vdash_{\mathscr{B},\mathfrak{R}_A} \phi$, then $\Vdash_{\mathscr{B},\mathfrak{R}_A} \psi\to\phi$. By Lemma \ref{ModalMonotonicity}, for any $\mathscr{C}\supseteq\mathscr{B}$ s.t. $\Vdash_{\mathscr{C},\mathfrak{R}_A} \psi$ we also have $\Vdash_{\mathscr{C},\mathfrak{R}_A} \phi$ and so $\Vdash_{\mathscr{B},\mathfrak{R}_A} \psi\to\phi$.

    For \emph{(2)} we follow the same strategy of showing that if the formula on the left of the turnstyle holds at a base, so does the right. To show that $\Vdash_{\mathscr{B},\mathfrak{R}_A} (\phi\to\psi)\to(\phi\to\chi)$, we need to show that for all $\mathscr{C}\supseteq\mathscr{B}$, if $\Vdash_{\mathscr{C},\mathfrak{R}_A} \phi\to\psi$, then $\Vdash_{\mathscr{C},\mathfrak{R}_A} \phi\to\chi$. Take some $\mathscr{D}\supseteq\mathscr{C}$ s.t. $\Vdash_{\mathscr{D},\mathfrak{R}_A} \phi\to(\psi\to\chi)$, $\Vdash_{\mathscr{D},\mathfrak{R}_A} \phi\to\psi$ and $\Vdash_{\mathscr{D},\mathfrak{R}_A} \phi$. By \emph{(MP*)}, we get $\Vdash_{\mathscr{D},\mathfrak{R}_A} \chi$ and so, $\Vdash_{\mathscr{C},\mathfrak{R}_A} \phi\to\chi$ and, finally, $\Vdash_{\mathscr{B},\mathfrak{R}_A} (\phi\to\psi)\to(\phi\to\chi)$.

    For \emph{(3)} we take some $\mathscr{B}$ with $\Vdash_{\mathscr{B},\mathfrak{R}_A} (\phi\to\bot)\to(\psi\to\bot)$. To show that $\Vdash_{\mathscr{B},\mathfrak{R}_A} \psi\to\phi$, we show that for all $\mathscr{C}\supseteq\mathscr{B}$ s.t. $\Vdash_{\mathscr{C},\mathfrak{R}_A} \psi$ we also have $\Vdash_{\mathscr{C},\mathfrak{R}_A} \phi$. Assume $\nVdash_{\mathscr{C},\mathfrak{R}_A} \phi$ for contradiction. By Lemma \ref{lem:excludedmiddle}, all maximally-consistent $\mathscr{C}^*\supseteq\mathscr{C}$ either $\Vdash_{\mathscr{C}^*,\mathfrak{R}_A} \phi$ or $\Vdash_{\mathscr{C}^*,\mathfrak{R}_A} \phi\to\bot$. By Lemma \ref{ModalMonotonicity}, we know that $\Vdash_{\mathscr{C}^*,\mathfrak{R}_A} (\phi\to\bot)\to(\psi\to\bot)$ and $\Vdash_{\mathscr{C}^*,\mathfrak{R}_A} \psi$. So, if $\Vdash_{\mathscr{C}^*,\mathfrak{R}_A} \phi\to\bot$, then, by \emph{(MP*)}, $\Vdash_{\mathscr{C}^*,\mathfrak{R}_A} \psi\to\bot$ and $\Vdash_{\mathscr{C}^*,\mathfrak{R}_A} \bot$. This is a contradiction as $\mathscr{C}^*$ is, by assumption, consistent. So, $\phi$ holds at all maximally-consistent superset bases of $\mathscr{C}$ and so, by the contrapositive of Lemma \ref{lem:nonvalidMaxcon}, $\Vdash_{\mathscr{C},\mathfrak{R}_A} \phi$.
\end{proof}

\medskip 

\noindent{\bf Lemma 5.2} \emph{The following axioms hold:}
    \begin{enumerate}[label=]
         \item $(K)$ $K_a(\phi\to\psi)\Vdash K_a\phi\to K_a\psi$
         \item $(T)$  $K_a\phi\Vdash \phi$
         \item $(4)$ $K_a \phi\Vdash K_a K_a\phi$.  
         \item $(5)$ $\neg K_a \phi\Vdash K_a\neg K_a\phi$
    \end{enumerate} 
    \emph{Additionally, the following rule holds:}
        \begin{enumerate}[label=]
            \item $(NEC)$ \emph{From $\Vdash \phi$, infer $\Vdash K_a\phi$}       
        \end{enumerate}

\begin{proof}
    For \emph{(K)}, we show the contrapositive and assume $\nVdash_{\mathscr{B},\mathfrak{R}_A} K_a\phi\to K_a\psi$ for an arbitrary $\mathscr{B}$ and $\mathfrak{R}_A$. So, there is a $\mathscr{C}\supseteq\mathscr{B}$ s.t. $\Vdash_{\mathscr{C},\mathfrak{R}_A} K_a\phi$ and $\nVdash_{\mathscr{C},\mathfrak{R}_A} K_a\psi$. So, there are $\mathscr{D}\supseteq\mathscr{C}$ and $\mathscr{E}$ s.t. $\mathfrak{R}_a\mathscr{D}\mathscr{E}$ with $\Vdash_{\mathscr{E},\mathfrak{R}_A} \phi$ but $\nVdash_{\mathscr{E},\mathfrak{R}_A} \psi$ and so $\nVdash_{\mathscr{E},\mathfrak{R}_A} \phi\to\psi$ and $\Vdash_{\mathscr{D},\mathfrak{R}_A} K_a(\phi\to\psi)$. Finally, as $\mathscr{D}\supseteq\mathscr{B}$, $\nVdash_{\mathscr{B},\mathfrak{R}_A} K_a(\phi\to\psi)$.  

    For \emph{(T)}, take some $\Vdash_{\mathscr{B},\mathfrak{R}_A} K_a\phi$. By the reflexivity of $\mathfrak{R}_a$, $\mathfrak{R}_a\mathscr{B}\mathscr{B}$ and so $\Vdash_{\mathscr{B},\mathfrak{R}_A} \phi$.

For \emph{(4)}, we, without loss of generality, assume some $\mathscr{B}$ and $\mathfrak{R}_A$ s.t. $\Vdash_{\mathscr{B}, \mathfrak{R}_A} K_a \phi$. For contradiction, we also assume $\nVdash_{\mathscr{B}, \mathfrak{R}_A} K_a K_a\phi$. So, there are $\mathscr{C}\supseteq\mathscr{B}$ and $\mathscr{D}$ s.t. $\mathfrak{R}_a\mathscr{C}\mathscr{D}$ and $\nVdash_{\mathscr{D}, \mathfrak{R}_A} K_a \phi$. This means there have to be $\mathscr{E}\supseteq\mathscr{D}$ and $\mathscr{F}$ s.t. $\mathfrak{R}_a\mathscr{E}\mathscr{F}$ and $\nVdash_{\mathscr{F}, \mathfrak{R}_A} \phi$. By Definition \ref{modalRelation} $(d)$, we know there is a $\mathscr{G}\subseteq\mathscr{F}$ and $\mathfrak{R}_a \mathscr{D}\mathscr{G}$ and, by transitivity, $\mathfrak{R}_a\mathscr{C}\mathscr{G}$. Obviously, $\nVdash_{\mathscr{G}, \mathfrak{R}_A} \phi$ and so $\nVdash_{\mathscr{B}, \mathfrak{R}_A} K_a \phi$, which contradicts our assumption.
    
    For \emph{(5)}, we assume, without loss of generality, some $\mathscr{B}$ and $\mathfrak{R}_A$ s.t. $\Vdash_{\mathscr{B}, \mathfrak{R}_A} \neg K_a \phi$ and, for contradiction, $\nVdash_{\mathscr{B}, \mathfrak{R}_A} K_a\neg K_a \phi$. So, there is $\mathscr{C}$ s.t. $\mathfrak{R}_a\mathscr{B}\mathscr{C}$ and $\nVdash_{\mathscr{C}, \mathfrak{R}_A} \neg K_a \phi$. So, there is a $\mathscr{D}\supseteq\mathscr{C}$ s.t $\mathscr{D}$ is consistent and $\Vdash_{\mathscr{D}, \mathfrak{R}_A} K_a \phi$.

    From $\Vdash_{\mathscr{B}, \mathfrak{R}_A} \neg K_a \phi$ it follows that for all consistent $\mathscr{E}\supseteq\mathscr{B}$, we have $\nVdash_{\mathscr{B}, \mathfrak{R}_A} K_a \phi$. So there is a $\mathscr{F}$ s.t. $\mathfrak{R}_a\mathscr{E}\mathscr{F}$ and $\nVdash_{\mathscr{F}, \mathfrak{R}_A} \phi$.

    By symmetry, we have $\mathfrak{R}_a \mathscr{C}\mathscr{B}$ and so, by condition (c) of Definition \ref{modalRelation}, for one of these $\mathscr{E}\supseteq\mathscr{B}$ we have $\mathfrak{R}_a\mathscr{D}\mathscr{E}$ and, again by symmetry, $\mathfrak{R}_a \mathscr{E}\mathscr{D}$. By Euclidean and $\mathfrak{R}_a\mathscr{E}\mathscr{F}$, we have $\mathfrak{R}_a\mathscr{D}\mathscr{F}$ which gives us our contradiction: We have $\Vdash_{\mathscr{D}, \mathfrak{R}_A} K_a \phi$ and $\nVdash_{\mathscr{F}, \mathfrak{R}_A} \phi$.

    \iffalse{
    
    For (5), we assume, without loss of generality, some $\mathscr{B}$ and $\mathfrak{R}$ s.t. $\Vdash^\gamma_{\mathscr{B}, \mathfrak{R}} \lozenge \phi$ and, for contradiction, $\nVdash^\gamma_{\mathscr{B}, \mathfrak{R}} \square\lozenge \phi$. So, there are $\mathscr{C}\supseteq\mathscr{B}$ and $\mathscr{D}$ s.t. $\mathfrak{R}\mathscr{C}\mathscr{D}$ and $\nVdash^\gamma_{\mathscr{D}, \mathfrak{R}} \lozenge \phi$. So, there are $\mathscr{E}\supseteq\mathscr{D}$ and for all $\mathscr{F}$ s.t. $\mathfrak{R}\mathscr{E}\mathscr{F}$ $\nVdash^\gamma_{\mathscr{F}, \mathfrak{R}} \phi$.

    \noindent By $\Vdash^\gamma_{\mathscr{B}, \mathfrak{R}} \lozenge \phi$, we know there is a $\mathscr{G}$ s.t. $\mathfrak{R}\mathscr{C}\mathscr{G}$ and $\Vdash^\gamma_{\mathscr{G}, \mathfrak{R}} \phi$ and, by Euclidean, $\mathfrak{R}\mathscr{D}\mathscr{G}$. By (c), we know that there is an $\mathscr{H}\supseteq\mathscr{G}$ s.t. $\mathfrak{R}\mathscr{E}\mathscr{H}$ and, by Lemma \ref{ModalMonotonicity}, $\Vdash^\gamma_{\mathscr{H}, \mathfrak{R}} \phi$. So, we have a contradiction.

DIAMOND!
}\else

    We show \emph{(NEC)} via the contrapositive. If $\nVdash^A K_a\phi$, then $\nVdash^A \phi$. Suppose $\nVdash^A K_a\phi$. So, there is a $\mathscr{B}$ and an $\mathfrak{R}_A$ s.t. $\nVdash_{\mathscr{B},\mathfrak{R}_A} K_a\phi$. Therefore, there are $\mathscr{C}\supseteq\mathscr{B}$ and $\mathscr{D}$ s.t. $\mathfrak{R}_a\mathscr{C}\mathscr{D}$ and $\nVdash_{\mathscr{D},\mathfrak{R}_A} \phi$ and, so, $\nVdash^A\phi$.
    
\end{proof}

\medskip 

\noindent{\bf Theorem 5.4 (Soundness)} \emph{If $\phi$ is valid in the base-extension semantics for multi-agent modal logic $S5$, then $\phi$ is a theorem of multi-agent modal logic $S5$.}
\begin{proof}
    We assume an $S5$ model $M = \langle F,V\rangle$ with $F = \langle W,R_A\rangle$, $R_A$ is a set of relation $R_a$, and a world $w$ s.t. $M,w\nvDash \phi$. We now implement the steps of the proof sketch.
    
    We begin with step (i). We can have worlds with the same valuation but not maximally-consistent bases that agree on all atomic formulae. So we need to adapt our $M$ so that every world disagrees on some atomic formulae that are not relevant to $\phi$. We do so by assigning a different atomic formula $q$ that does not appear in $\phi$ to each world. This is possible as we have infinitely many atomic formulae. For every $v\in W$, let $q_v$ be some atomic formula s.t. it does not appear in $\phi$ and, for all worlds $u$, if $u\neq v$, then $q_v \neq q_u$.

    Let $M' = \langle F, V'\rangle$ be s.t. for all $v\in W$, $V'(q_v) = W\setminus\{v\}$ and $V'(p) = V(p)$ for all other atomic formulae $p$. Note that $M', w \nvDash \phi$. This concludes our first step.

    In step (ii), this model is used to construct bases that correspond to its worlds. For every world $w\in W$, we define a base $\mathscr{A}_w$ in the following way:

    \[
    \begin{split}
    \mathscr{A}_w := & \{\Rightarrow p : M',w\vDash p\} \cup \\
                 & \{\Rightarrow q_v : v \in W \text{ and } v\neq w\} \cup \\
                 & \{p\Rightarrow q_w, q_w\Rightarrow p : M',w\nvDash p\} \\
    \end{split}
    \]

    By Lemma \ref{MaxCon}, we know there is a maximally-consistent $\mathscr{C}\supseteq\mathscr{A}_w$ s.t. $\Vdash_{\mathscr{C}} q_w$. We call this $\mathscr{C}$ $\mathscr{B}_w$.

    This gives us the bases we require, but we still need to define a set of relations $\mathfrak{R}_A$ between them. This is step (iii). 

    We consider the relations such that, for every agent $a\in A$, $\mathfrak{R}_a\mathscr{B}\mathscr{C}$ only by the following:

    \begin{enumerate}[label=(\arabic{enumi})]
    \item if $R_a wv$, then $\mathfrak{R}_a\mathscr{B}_w\mathscr{B}_v$
    \item if $\mathscr{B}$ and $\mathscr{C}$ are inconsistent, then $\mathfrak{R}_a\mathscr{B}\mathscr{C}$
    
    \item 
    \begin{enumerate}
    \item for all $\mathscr{B}$ and $\mathscr{C}$ s.t. there are exactly one maximally-consistent $\mathscr{B}'\supset\mathscr{B}$ and one maximally-consistent $\mathscr{C}'\supset \mathscr{C}$ and there are $w, v\in W$ s.t. $\mathscr{B}'=\mathscr{B}_w$ and $\mathscr{C}'=\mathscr{B}_v$, if $R_a wv$, then $\mathfrak{R}_a\mathscr{B}\mathscr{C}$, 

    \item for all $\mathscr{B}$, $\mathscr{C}$ s.t. there are $w,v \in W$ and $\mathscr{B}_w\supset \mathscr{B}$ and $\mathscr{B}_v\supset\mathscr{B}$, if $R_a wv$ and there are maximally-consistent $\mathscr{B}'\supset\mathscr{B}$ and $\mathscr{C}'\supset\mathscr{C}$ and $\mathscr{B}'\neq\mathscr{B}_w$ and $\mathscr{C}'\neq\mathscr{B}_v$, then $\mathfrak{R}_a\mathscr{B}\mathscr{C}$,

    \item for all consistent $\mathscr{B}$, if there are no $w,v\in W$ with $\mathscr{B}_w\supseteq\mathscr{B}$ and $\mathscr{B}_v\supseteq\mathscr{C}$, then $\mathfrak{R}_a\mathscr{B}\mathscr{C}$,
    \end{enumerate}

    \item reflexivity: for all bases $\mathscr{B}$, $\mathfrak{R}_a\mathscr{B}\mathscr{B}$, 

    \item transitivity: if $\mathfrak{R}_a\mathscr{B}\mathscr{C}$ and $\mathfrak{R}_a\mathscr{C}\mathscr{D}$, then $\mathfrak{R}_a\mathscr{B}\mathscr{D}$

    \item Euclidean: if $\mathfrak{R}_a\mathscr{B}\mathscr{C}$ and $\mathfrak{R}_a\mathscr{B}\mathscr{D}$, then $\mathfrak{R}_a\mathscr{C}\mathscr{D}$.
    
\end{enumerate}

    We have now constructed the bases and the relation required, but we have not yet shown that the relations are in fact $S5$-modal relations. This is step (iv).

    It is easy to see that any $\mathfrak{R}_a$ is an equivalence relation by (4), (5), and (6). We still need to show that the conditions (a)-(d) from Definition \ref{modalRelation} hold (i.e., that it is a modal relation). Since (2) simply connects all inconsistent bases with each other, condition $(a)$ holds straightforwardly. Any step that can connect a base to an inconsistent base can only do so if the initial base is itself inconsistent. So, $(b)$ follows.

To show (c), we have to consider all steps of the construction of $\mathfrak{R}_a$ that introduce new connections. (1) only applies for maximally-consistent bases and (2) only for inconsistent ones, so (c) holds for those trivially. 

For (3c), note that all $\mathscr{D}\supseteq \mathscr{B}$ are also do not have any $w$ s.t. $\mathscr{B}_w \supseteq \mathscr{D}$ and so $\mathfrak{R}_a\mathscr{D}\mathscr{C}$. For $\mathfrak{R}_a\mathscr{B}\mathscr{C}$ added by (3a), there are $\mathscr{B}_w\supset\mathscr{B}$ and $\mathscr{B}_v\supset\mathscr{C}$ with $\mathfrak{R}_a\mathscr{B}_w\mathscr{B}_v$ and for all $\mathscr{B}\subseteq\mathscr{B}'\subset\mathscr{B}_w$ we have $\mathfrak{R}_a\mathscr{B}'\mathscr{C}$.   

For (3b), if there are $w,v \in W$ and $\mathscr{B}_w\supseteq \mathscr{B}$ and $\mathscr{B_v}\supseteq\mathscr{B}$ and there are maximally-consistent $\mathscr{B}'\supseteq\mathscr{B}$ and $\mathscr{C}'\supseteq\mathscr{C}$ and $\mathscr{B}'\neq\mathscr{B}_w$ and $\mathscr{C}'\neq\mathscr{B}_v$, then we have a couple of cases to consider for $\mathscr{D}$:

If $\mathscr{D}$ has $\mathscr{B}_w$ and another maximally-consistent base $\mathscr{D}'$ as their supersets, we get $\mathfrak{R}_a\mathscr{D}\mathscr{C}$ by (3b).

If $\mathscr{D} =\mathscr{B}_w$ for some $w$, then there is a $\mathscr{B}_v\supset\mathscr{C}$ and $\mathfrak{R}_a\mathscr{B}_w\mathscr{B}_v$.

If there is a $w$ and exactly one maximally-consistent $\mathscr{D}'\supset\mathscr{D}$ s.t. $\mathscr{D}'=\mathscr{B}_w$, then there is also a $v$ s.t. $\mathfrak{R}_a\mathscr{B}_w\mathscr{B}_v$ and $\mathscr{B}_v\supset\mathscr{C}$. By (3a), for all $\mathscr{E}\supset\mathscr{C}$ s.t. $\mathscr{B}_v$ is the only maximally-consistent superset of $\mathscr{E}$, we have $\mathfrak{R}_a\mathscr{D}\mathscr{E}$.

If $\mathscr{D}$ has no $w$ s.t. $\mathscr{B}_w\supseteq\mathscr{D}$, then we have $\mathfrak{R}_a\mathscr{D}\mathscr{E}$ for all $\mathscr{E}\supset\mathscr{C}$ s.t. there are no $v$ with $\mathscr{B}_v\supset\mathscr{E}$.

For (4), simply note that if $\mathfrak{R}_a\mathscr{B}\mathscr{B}$, then for all consistent $\mathscr{C}\supset\mathscr{B}$, we have $\mathfrak{R}_a\mathscr{C}\mathscr{C}$. For (5), take some $\mathfrak{R}_a$ with $\mathfrak{R}_a\mathscr{B}\mathscr{C}$ and $\mathfrak{R}_a\mathscr{C}\mathscr{D}$ for which (c) holds. We show that is still holds when we add $\mathfrak{R}_a \mathscr{B}\mathscr{D}$. By assumption we know that for all $\mathscr{B}'\supseteq\mathscr{B}$ there are $\mathscr{C}'\supseteq\mathscr{C}$ s.t. $\mathfrak{R}_a\mathscr{B}'\mathscr{C}'$ by (c). Similarly, we know that there is a $\mathscr{D}'\supseteq\mathscr{D}$ and that $\mathfrak{R}_a\mathscr{C}'\mathscr{D}'$. So, by transitivity, $\mathfrak{R}_a\mathscr{B}'\mathscr{D}'$. For (6), we follow the same strategy. Suppose $\mathfrak{R}_a\mathscr{B}\mathscr{C}$ and $\mathfrak{R}_a\mathscr{B}\mathscr{D}$. By assumption we know that for all $\mathscr{B}'$ there are $\mathscr{C}'\supseteq\mathscr{C}$ and $\mathscr{D}'\supseteq\mathscr{D}$ with $\mathfrak{R}_a\mathscr{B}'\mathscr{C}'$ and $\mathfrak{R}_a\mathscr{B}'\mathscr{D}'$ and so, by Euclidean, $\mathfrak{R}_a\mathscr{C}'\mathscr{D}'$. With this we have shown that (c) holds for $\mathfrak{R}_a$.

For (d), again, we go through the steps of the construction of $\mathfrak{R}_a$. Again, (2) holds trivially because it only deals with inconsistent bases.

For $(3b)$, it suffices that for every $\mathscr{B}'\subseteq\mathscr{B}$ we have $\mathfrak{R}_a\mathscr{B}'\emptyset$.

For $(3c)$ and $\mathscr{B}'\subseteq\mathscr{B}$, there are two cases to consider: Either there is a $w$ s.t. $\mathscr{B}'\subset\mathscr{B}_w$, in which case, by $(3b)$, $\mathfrak{R}_a\mathscr{B}'\emptyset$, or there is no such $w$, in which case for all $\mathscr{C}$, s.t. $\mathfrak{R}_a\mathscr{B}\mathscr{C}$, we also have $\mathfrak{R}_a\mathscr{B}'\mathscr{C}$, by $(3c)$.

For $(3a)$ and $\mathscr{B}'\subseteq\mathscr{B}$, there are similarly two cases to consider. If $\mathscr{B}_w$ is still the only maximally-consistent superset of $\mathscr{B}'$, then,  by $(3a)$, $\mathfrak{R}_a\mathscr{B}'\mathscr{C}$ for all $\mathscr{C}$ s.t. $\mathfrak{R}_a\mathscr{B}\mathscr{C}$. Otherwise, by $(3b)$, $\mathfrak{R}_a\mathscr{B}'\emptyset$.

For $(1)$ we have to consider three cases now: The case in which $\mathscr{B}'=\mathscr{B}_w$ is trivial, the case in which $\mathscr{B}_w$ is the only maximally-consistent superset base of $\mathscr{B}'$ immediately follows from $(3a)$, and, finally, if there are multiple maximally-consistent supersets of $\mathscr{B}'$, then, by $(3b)$, $\mathfrak{R}_a\mathscr{B}'\emptyset$.

For (4), (5), and (6), it suffices to take the proofs of (c) for these cases and flip the subset-relations. 

With these results, we have show that every relation $\mathfrak{R}_a\in \mathfrak{R}_A$ is an $S5$-modal relation. 

\iffalse{For (e), we show the contrapositive: For all $\mathscr{B}$ and $\mathscr{C}$ s.t. $\mathfrak{R}\mathscr{B}\mathscr{C}$ and for all formulae $\phi$, if $\nVdash^\gamma_{\mathscr{C},\mathfrak{R}} \phi$  then there are $\mathscr{D}\supset\mathscr{B}$ and $\mathscr{E}$ s.t. $\mathfrak{R}\mathscr{D}\mathscr{E}$ $\nVdash^\gamma_{\mathscr{E},\mathfrak{R}}\phi$. Again we go step-by-step through the construction of $\mathfrak{R}$.

For an inconsistent $\mathscr{B}$, note that we cannot have $\nVdash^\gamma_{\mathscr{C},\mathfrak{R}} \phi$ because $\mathscr{C}$ is inconsistent by (a). 

For $\mathscr{B}=\mathscr{B}_w$ for some $w$, note that $\mathscr{B}$ is maximally-consistent and so there are only inconsistent $\mathscr{D}$.

For $\mathscr{B}$ s.t. there is no $w$ with $\mathscr{B}\subseteq\mathscr{B}_w$, all consistent $\mathscr{D}\supset\mathscr{B}$ also have $\mathfrak{R}\mathscr{D}\mathscr{F}$ for all $\mathfrak{R}\mathscr{B}\mathscr{F}$, by (3c) and, especially, $\mathfrak{R}\mathscr{D}\mathscr{C}$.

For $\mathscr{B}$ s.t. there is a $w$ and $\mathscr{B}_w$ is the only maximally-consistent superset base of $\mathscr{B}$, if $\mathscr{D}$ is not maximally-consistent, then it also only has $\mathscr{B}_w$ as its maximally-consistent superset base and so $\mathfrak{R}\mathscr{D}\mathscr{C}$. Otherwise $\mathscr{D}=\mathscr{B}_w$ and there is a $v$ s.t. $\mathscr{E}=\mathscr{B}_v$ and we have $\nVdash^\gamma_{\mathscr{E},\mathfrak{R}} \phi$ by Lemma \ref{lem:ModalAlmostMaxCon}.


Finally, suppose $\mathscr{B}$ has multiple maximally-consistent supersets and there is at least one $w$ s.t. $\mathscr{B}_w\supseteq\mathscr{B}$. For simplicity, let us say that these base belong in group 3b, as their relations are covered by step (3b). Note that every base that is not in group 3b, is a superset of some of the 3b bases. Since all bases in 3b have access to all and only the 3b bases, it suffices to show the following: For all $\phi$, if there is a $\mathscr{B}$ in 3b s.t. $\nVdash^\gamma_{\mathscr{B},\mathfrak{R}} \phi$, then there is a $\mathscr{B}^*\supset\mathscr{B}$ s.t. $\mathscr{B}^*$ is not in 3b and $\nVdash^\gamma_{\mathscr{B}^*,\mathfrak{R}} \phi$. We proof this by induction on $\phi$.

For $\phi = p$ or $\bot$, this follows from Lemma \ref{MaxCon}.

For $\phi = \square\psi$, note that, by Definition \ref{EXTValidity}, $\nVdash^\gamma_{\mathscr{B},\mathfrak{R}} \square\psi$ iff there are $\mathscr{B}'\supseteq\mathscr{B}$ and $\mathscr{C}$ s.t. $\mathfrak{R}\mathscr{B}'\mathscr{C}$ and $\nVdash^\gamma_{\mathscr{C},\mathfrak{R}} \psi$. If $\mathscr{B}'$ is outside of 3b, we are done. Otherwise $\mathscr{C}$ has to also be in 3b and by the inductive hypothesis, there is a $\mathscr{C}'\supset\mathscr{C}$ with $\nVdash^\gamma_{\mathscr{C}^*,\mathfrak{R}} \psi$ outside of 3b. By our construction of $\mathfrak{R}$, there will also be a $\mathscr{B}^*\supset\mathscr{B}$ with $\mathfrak{R}\mathscr{B}^*\mathscr{C}^*$ and so $\nVdash^\gamma_{\mathscr{B}^*,\mathfrak{R}} \square\psi$.

Let $\phi= \psi\to\chi$. Again by Definition \ref{EXTValidity}, we know that $\nVdash^\gamma_{\mathscr{B},\mathfrak{R}} \psi\to\chi$ iff there is a $\mathscr{B}'\supseteq\mathscr{B}$ s.t. $\Vdash^\gamma_{\mathscr{B}',\mathfrak{R}} \psi$ but $\nVdash^\gamma_{\mathscr{B}',\mathfrak{R}} \chi$. Like before, if $\mathscr{B}'$ is outside of 3b, we are done. Otherwise there is a $\mathscr{B}^*\supset\mathscr{B}'$ outside of 3b with $\nVdash^\gamma_{\mathscr{B}^*,\mathfrak{R}} \chi$ by the inductive hypothesis. By Lemma \ref{ModalMonotonicity}, we know that $\Vdash^\gamma_{\mathscr{B}^*,\mathfrak{R}} \psi$ and we get $\nVdash^\gamma_{\mathscr{C},\mathfrak{R}} \psi\to\chi$.

This concludes our proof that condition (e) holds for $\mathfrak{R}$. As we have shown all conditions from Definition \ref{modalRelation}, we know that $\mathfrak{R}$ is a modal relation and so a $S5$-modal relation.
}\else

We proceed to show step (v): for all $\phi$ and $w\in W$, $\Vdash_{\mathscr{B}_w,\mathfrak{R}_A} \phi$ iff $M',w\vDash \phi$. We do so by induction on $\phi$. The case in which $\phi= p$ follows directly from our construction of $\mathscr{B}_w$. 

For the case in which $\phi=\bot$ it suffices to point out that $\bot$ cannot hold at $w$ and that $\mathscr{B}_w$ is consistent. 

For $\phi=\psi\to\chi$, note that $M',w\Vdash \psi\to\chi$ iff $M',w\Vdash \psi$ or $M,w\nVdash \chi$. By Lemma \ref{ModalBehaviour}, we similarly have $\Vdash_{\mathscr{B}_w,\mathfrak{R}_A} \psi\to\chi$ iff $\nVdash_{\mathscr{B}_w,\mathfrak{R}_A} \psi$ or $\Vdash_{\mathscr{B}_w,\mathfrak{R}_A} \chi$. We conclude by the inductive hypothesis.

Finally, for $\phi= K_a \psi$, $M',w\Vdash K_a \psi$ iff for all $v$ s.t. $R_a wv$, $M',v\Vdash \psi$. Again by Lemma \ref{ModalBehaviour}, we have $\Vdash_{\mathscr{B}_w,\mathfrak{R}_A} K_a \psi$ iff for all $\mathscr{C}$ s.t. $\mathfrak{R}_a\mathscr{B}_w\mathscr{C}$, $\Vdash_{\mathscr{C},\mathfrak{R}_A} \psi$. By our construction of $\mathfrak{R}_a$ we know for every such $\mathscr{C}$ there is a $v\in W$ s.t. $\mathscr{C}=\mathscr{B}_v$ and $R_a wv$. By the inductive hypothesis $\Vdash_{\mathscr{C},\mathfrak{R}_A} \psi$ iff $M,v\Vdash \psi$ and we are done.

With this we can reach our result by step 
 (vi). From $M',w\nvDash \phi$ and (v), it follows that $\nVdash_{\mathscr{B}_w, \mathfrak{R}_A} \phi$. Since $\mathfrak{R}_A$ is a set of $S5$-modal relations, $\phi$ is not valid in the base-extension semantics for $S5$.

We conclude that (for arbitrary $\phi$), if $\phi$ is not valid in the Kripke semantics for $S5$, then it is not valid in the base-extension semantics for epistemic logic either.
    
\end{proof}

\end{document}